\newtheorem{theorem}{Theorem}[section]
\newtheorem{proposition}[theorem]{Proposition}
\newtheorem{definition}[theorem]{Definition}
\newtheorem*{theorem*}{Theorem}
\newtheorem{corollary}[theorem]{Corollary}
\theoremstyle{remark}
\newtheorem{remark}[theorem]{Remark}
\newtheorem{example}[theorem]{Example}
\numberwithin{equation}{subsection}
\title{Iterated and Generalized Iterated Integrals}
\author{Chitrarekha Sahu}
\address{Indian Institute of Science Education and Research Mohali, Punjab 140306, India}
\email{chitrarekhasahu97@gmail.com}
\author{Matthias Sei\ss}
\address{Institut für Mathematik, Universit\"at Kassel, 34109 Kassel, Germany}
\email{mseiss@mathematik.uni-kassel.de}
\author{V. Ravi Srinivasan}
\address{Indian Institute of Science Education and Research Mohali, Punjab 140306, India}
\email{ravisri@iisermohali.ac.in}
\thanks{The first named author would like to acknowledge the support of DST-FIST grant SR/FST/MS-I/2019/46(C)}
\date{\today}
\begin{document}
	\begin{abstract}
		For a differential field $F$ having an algebraically closed field of constants, we analyze the structure of Picard-Vessiot extensions of $F$ whose differential Galois groups are unipotent algebraic groups and apply these results to study stability problems in integration in finite terms and the inverse problem in differential Galois theory for unipotent algebraic groups.
		\end{abstract}
	\maketitle
	%%%%%%%%%%%%%%%%%%%%%%%%%%%%%%%%%%%%%%%%%%%%%%%%%%%%%%%%%%%%%%%%
	
	\section{Introduction}
		
Let $F$ be a differential field of characteristic zero with an algebraically closed field of constants $C$ and denote by $F^*$ the nonzero elements of $F$. For an element $f\in F$ we denote its first derivative by $f'$ and its $n$-th derivative by $f^{(n)}$. Let $F[\partial]$ be the ring of linear differential operators over $F$ with the usual rule $\partial f = f \partial +f'$. For $\mathbf f:=(f_1,\dots, f_l)\in   (F^{*})^{l}$ define  
$$\mathcal L_{\mathbf f}:=f_1\partial f_2\partial\dots f_{l-1}\partial f_l\partial\in F[\partial].$$
	  Let $E$ be a differential field extension of $F$ having $C$ as its field of constants. An element $\eta\in E$ is said to be an \emph{integral} of an element $f\in F$ if $\eta'=f$. For convenience, we shall write $\int f$ to denote an integral of $f.$ An element $\eta\in E$ is called a \emph{generalized iterated integral} (respectively, an \emph{iterated integral}\footnote{Our definition of an iterated integral is equivalent to the following one presented in \cite[Section 7.2]{Roq-Sin}: An element $\eta\in E$ is called an \emph{iterated integral} of $F$, if $\eta^{(l)}= g \in F.$  To see this, observe that if $g=0$, then for $\mathbf f:=(1,1,\dots,1)\in (F^{*})^{l}$ we have $\mathcal L_\mathbf f(\eta)=0$ and if $ g \neq 0$, then for $\mathbf f:=(1,1/g, 1,\dots,1)\in (F^{*})^{l+1}$ we have $\mathcal L_\mathbf f(\eta)=0.$}) 
      of $F$ if there is a positive integer $l$ and an element $\mathbf f:=(f_1,f_2,\dots, f_{l})\in (F^{*})^{l}$ (respectively, $\mathbf f:=(1, f_2 ,1,\dots,1)\in (F^{*})^{l}$) such that $\mathcal L_{\mathbf f}(\eta)=0.$

	In this article, we prove the following theorems.
\begin{theorem} \label{intro-main} Let $E$ be a Picard-Vessiot extension of $F$ for a monic operator $\mathcal L\in F[\partial]$ of order $n$ and suppose that $F$ contains an element $x$ with $x'=1.$ 
	\begin{enumerate}[(i)]
	   \item \label{noncommutative-giint}  The following statements are equivalent. 
       \begin{enumerate} 
       \item The differential Galois group of $E$ over $F$ is a unipotent linear algebraic group. \item The Picard-Vessiot ring of $E$ is the set of all generalized iterated integrals in $E$ of $F.$ 
       \item The field $E$ is generated as a differential field over $F$ by a generalized iterated integral of $F.$ In particular, there exists $\mathbf g\in (F^*)^{l}$ such that $E$ is a Picard-Vessiot extension for the differential operator $\mathcal L_\mathbf g.$

\item The operator $\mathcal L$ can be factored as $$\mathcal L= \mathcal L_\mathbf ff_{n+1},$$ for some $f_{n+1}\in F^*$ and $\mathbf f=(f_1,f_2,\dots,f_n)\in (F^*)^n,$ in which case, there is a $C-$basis $v_1,v_2,\dots,v_n\in E$ for the solution space of $\mathcal L(y)=0$ such that $$v_1=\frac{1}{f_{n+1}}, v_2=\frac{1}{f_{n+1}}\int\frac{1}{f_n}, v_3=\frac{1}{f_{n+1}}\left(\int\frac{1}{f_n}\left(\int\frac{1}{f_{n-1}}\right)\right),\cdots,$$ $$v_n=\frac{1}{f_{n+1}}\left(\int\frac{1}{f_n}\left(\int\frac{1}{f_{n-1}}\cdots\frac{1}{f_3}\left(\int\frac{1}{f_2}\right)\cdots\right)\right).$$
 \\

\end{enumerate}

		\item \label{commutative-itint} The following statements are equivalent. 
        \begin{enumerate}
        \item The field $E$ is generated as a differential field over $F$ by an iterated integral of $F.$ \item The differential Galois group of $E$ over $F$ is isomorphic to a commutative unipotent algebraic group. \end{enumerate}
		\end{enumerate}

		\end{theorem}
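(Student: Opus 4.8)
Throughout I only discuss part~\eqref{commutative-itint}; the implication (b)$\Rightarrow$(a) carries all of the content. First the easy direction. Suppose $E=F\langle\eta\rangle$ with $\eta$ an iterated integral; by the reformulation in the footnote we may assume $\eta^{(d)}=g\in F$ for some $d$, so that $E=F(\eta,\eta',\dots,\eta^{(d-1)})$. Using that $x'=1$, a routine induction shows that for each $\sigma$ in the differential Galois group there are constants $a_1(\sigma),\dots,a_d(\sigma)\in C$ with
\[
\sigma\bigl(\eta^{(d-1-k)}\bigr)=\eta^{(d-1-k)}+\sum_{i=1}^{k+1}a_i(\sigma)\,\frac{x^{\,k+1-i}}{(k+1-i)!},\qquad 0\le k\le d-1 .
\]
Each $a_i$ is a regular function on the Galois group and $\sigma\mapsto(a_1(\sigma),\dots,a_d(\sigma))$ is an injective homomorphism (the kernel fixes $\eta,\dots,\eta^{(d-1)}$, hence all of $E$), so it is a closed embedding of the Galois group into $\mathbb G_a^{\,d}$. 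Thus the Galois group is isomorphic to a closed subgroup of a vector group, i.e.\ to a commutative unipotent algebraic group, which gives (a)$\Rightarrow$(b).

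For (b)$\Rightarrow$(a): since $\operatorname{char}C=0$, a commutative unipotent algebraic group is isomorphic to $\mathbb G_a^{\,m}$ for some $m$. I would first record the standard structure fact that then $E=F(\eta_1,\dots,\eta_m)$ with $\eta_i'=g_i\in F$, where $\bar g_1,\dots,\bar g_m$ form a basis of the $m$-dimensional $C$-subspace $U:=\{\,\overline{\theta'}:\theta\in E,\ \theta'\in F\,\}\subseteq F/F'$ that $E$ determines (this is also derivable from part~\eqref{noncommutative-giint}), and that $E$ is recovered as the field generated over $F$ by antiderivatives of any basis of $U$. The key computational input is the identity, valid in $F/F'$ for every $h\in F$ and $j\ge 0$,
\[
\overline{x^{\,j}h^{(j)}}=(-1)^j\,j!\;\bar h ,\qquad\text{and}\qquad \overline{x^{\,i}h^{(j)}}=0\ \ \text{for}\ 0\le i<j,
\]
which follows by induction from $xh'=(xh)'-h$ and the Leibniz rule. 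Using it, one shows that the $C$-linear map $\Phi_m\colon F\to (F/F')^{m}$, $g\mapsto(\bar g,\overline{xg},\dots,\overline{x^{\,m-1}g})$, is \emph{surjective}: given a preimage $g_0$ of the first $m-1$ coordinates of a target vector, one corrects it by adding a suitable scalar multiple of an $m$-th derivative $y^{(m)}$, which fixes the first $m-1$ coordinates (they vanish on $x^{i}y^{(m)}$ for $i<m$) and adjusts the last one. Now pick $g\in F$ with $\overline{x^{\,j}g}=\bar g_{j+1}$ for $j=0,\dots,m-1$, and let $\eta$ be the iterated integral with $\eta^{(m)}=g$, i.e.\ $\mathcal L_{\mathbf f}(\eta)=0$ for $\mathbf f=(1,1/g,1,\dots,1)\in(F^*)^{m+1}$. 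Repeated integration by parts expresses $\eta,\eta',\dots,\eta^{(m-1)}$ in terms of $F$ and fixed antiderivatives $\textstyle\int g,\int xg,\dots,\int x^{\,m-1}g$ lying in $E$, so $F\langle\eta\rangle=F\bigl(\textstyle\int g,\int xg,\dots,\int x^{\,m-1}g\bigr)$; since $x^{\,j}g$ differs from $g_{j+1}$ by an exact element of $F$, each $\int x^{\,j}g$ differs from $\eta_{j+1}$ by an element of $F$, whence $F\langle\eta\rangle=F(\eta_1,\dots,\eta_m)=E$.

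The main obstacle is exactly the surjectivity of $\Phi_m$ — equivalently, the fact that one can ``unfold'' $m$ independent primitives of elements of $F$ into a single $m$-fold iterated primitive. This is the place where the hypothesis that $F$ contains an antiderivative $x$ of $1$ is essential (it is what makes $x$ and $\partial$ fail to commute, producing the identity above), as is characteristic zero, so that the integers and factorials appearing are invertible; this is also what separates the commutative case from the general situation of part~\eqref{noncommutative-giint}. A secondary, routine point is to verify that the constructed extension $F\langle\eta\rangle$ has $C$ as its field of constants and coincides with the given $E$, which is immediate from the linear independence of $g_1,\dots,g_m$ over $C$ modulo $F'$ and the Galois correspondence (the intermediate field $F\langle\eta\rangle$ has transcendence degree $m=\dim \mathbb G_a^{\,m}$, and $\mathbb G_a^{\,m}$ has no nontrivial finite subgroups).
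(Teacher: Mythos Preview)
Your argument for (a)$\Rightarrow$(b) is correct and in fact more direct than the paper's. The paper first establishes that the Galois group is unipotent (via the tower structure of $E$ over $F$), writes $\sigma(\eta)=c_\sigma\eta+f_\sigma$ with $c_\sigma\in C^*$ and $f_\sigma\in C[x]_{<d}$ coming from the action on the full solution space of $Y^{(d+1)}-(g'/g)Y^{(d)}=0$, and then invokes unipotency to kill the character $\sigma\mapsto c_\sigma$. You bypass this: since $(\sigma(\eta)-\eta)^{(d)}=0$ and $x\in F\subseteq E$ with $C_E=C$, one has $\sigma(\eta)-\eta\in C[x]_{<d}$ immediately, giving the embedding into $\mathbb G_a^{\,d}$ at once.

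For (b)$\Rightarrow$(a) your strategy is sound, but there is an off-by-one slip in the surjectivity argument for $\Phi_m$. Adding a multiple of an $m$-th derivative $y^{(m)}$ does \emph{not} adjust the last coordinate: by your own identity $\overline{x^{m-1}y^{(m)}}=0$ because $m-1<m$, so all $m$ coordinates of $\Phi_m$ are left unchanged. What you need is to add an $(m-1)$-st derivative $h^{(m-1)}$: then $\overline{x^{i}h^{(m-1)}}=0$ for $i\le m-2$, while $\overline{x^{m-1}h^{(m-1)}}=(-1)^{m-1}(m-1)!\,\bar h$ can be prescribed freely. With this correction the induction goes through and the rest of your argument (reconstructing $\eta$ inside $E$ from the $\int x^jg$ via integration by parts, and identifying $F\langle\eta\rangle$ with $F(\eta_1,\dots,\eta_m)$) is fine.

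The paper takes a shorter route for this direction: it simply sets
\[
\eta:=\eta_1+x\eta_2+\cdots+x^{m-1}\eta_m
\]
and checks by a direct induction that $\eta^{(k)}\equiv c_{1,k}\eta_{k+1}+c_{2,k}x\eta_{k+2}+\cdots+c_{m-k,k}x^{m-k-1}\eta_m\pmod{F}$ with nonzero integers $c_{i,k}$; hence $\eta^{(m)}\in F$ and one reads off $F\langle\eta\rangle=F(\eta_1,\dots,\eta_m)=E$ from the triangular shape of these relations. Your $\Phi_m$ framework explains \emph{why} such an $\eta$ should exist and isolates precisely where the hypothesis $x\in F$ enters, but the explicit formula makes the verification shorter and avoids having to reassemble $\eta$ from the antiderivatives $\int x^jg$.
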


\begin{theorem}[Generalization of Liouville's Theorem]  \label{genofliouvilleintroduction} 
Let $g$ be an element of $F.$
\begin{enumerate}[(i)] 
\item Suppose that there exists an element  $x\in F$ with $x'=1.$  Then there is a positive integer $n$ and an element $\eta$ in some elementary extension\footnote{A differential field extension $E$ of $F$ is said to be an elementary extension of $F$, if $E$ has $C$ as its field of constants and there exist nonzero elements $\eta_1,\dots, \eta_n\in E$ such that $E=F(\eta_1,\dots,\eta_n)$ and  that either there is  a nonzero $\alpha_i\in F(\eta_1,\dots,\eta_{i-1})$ such that  $\eta'_i=\alpha'_i/\alpha_i$ or there is an $\alpha_i\in F(\eta_1,\dots,\eta_{i-1})$ with $\eta'_i/\eta_i=\alpha'_i$ or $\eta_i$ is algebraic over $F(\eta_1,\dots,\eta_{i-1}).$} 
of $F$ such that $\eta^{(n)}=g$, if and only if there exist $f, u_1,u_2,...,u_m\in F$ and $f_1,f_2,...,f_m \in C[x]$ with $\deg_x(f_i)\leq n-1$ for all $1\leq i\leq m$ such that  
$$g =f^{(n)}+\sum_{i=1}^{m}\sum_{j=1}^{n} f_i^{(n-j)}\left(\frac{u_i'}{u_i}\right)^{(j-1)}.$$

\item Suppose that $z'\neq 1$ for any $z\in F.$ Then there is a positive integer $n$ and an element $\eta$ in some elementary extension of $F$ such that $\eta^{(n)}=g$, if and only if 
there exist $f, u_1,u_2,...,u_m\in F$ and constants $c, c_1,c_2,...,c_m$ such that 
$$g=c+f^{(n)}+\sum_{i=1}^{m}c_i\left(\frac{u_i'}{u_i}\right)^{(n-1)}.$$
\end{enumerate}
\end{theorem}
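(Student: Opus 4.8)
The plan is to prove both parts by induction on $n$, with base case $n=1$ being the classical Liouville theorem in Rosenlicht's form. Indeed, for $n=1$ the asserted identity in (i) reads $g=f'+\sum_ic_iu_i'/u_i$ with the constants $c_i=f_i\in C$ (those $f_i$ having degree $0$), and in (ii) it reads $g=c+f'+\sum_ic_iu_i'/u_i$; so both reduce to the assertion that $g$ has an elementary antiderivative over $F$ if and only if $g=v'+\sum_ic_iu_i'/u_i$ with $v,u_i\in F$ and $c_i\in C$, the superfluous constant $c$ in (ii) being absorbed or shown to vanish according to whether an antiderivative of $1$ is itself elementary over $F$.

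For the \emph{only if} direction of the inductive step, suppose $\eta$ lies in an elementary extension $E$ of $F$ with $\eta^{(n)}=g$. Then $\theta:=\eta^{(n-1)}\in E$ is an antiderivative of $g$, so by the case $n=1$ we may write $g=v'+\sum_{i=1}^{m}c_iu_i'/u_i$ with $v,u_i\in F$, $c_i\in C$, and, after adjusting by an additive constant, $\theta=v+\sum_ic_i\log u_i$. Put $F_1:=F(\log u_1,\dots,\log u_m)$; after replacing the $u_i$ by a free system modulo $C^{\ast}$ (which only changes $\theta$ by a constant) this is an elementary extension of $F$ with no new constants, it still contains an element of derivative $1$ in case (i) and still contains no such element in case (ii), and $\eta$ lies in an elementary extension of $F_1$. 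Since $\eta$ is an $(n-1)$-fold antiderivative of $\theta\in F_1$, the induction hypothesis over $F_1$ yields a representation of $\theta$ in the form of the statement with $n$ replaced by $n-1$ and with the polynomials of $x$-degree $\le n-2$; differentiating this once, via the Leibniz rule and $(\log u)^{(k)}=(u'/u)^{(k-1)}$, gives an $n$-version representation of $g=\theta'$ — but so far only over $F_1$.

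The step I expect to be the main obstacle is the \emph{descent} from $F_1=F(\log u_1,\dots,\log u_m)$ back to $F$: one must show that the field elements occurring in this $F_1$-representation of $g$ can in fact be taken in $F$, at the expense of raising the $x$-degree of the polynomial coefficients from $n-2$ to $n-1$. This is exactly where an argument in the spirit of Rosenlicht's proof of Liouville's theorem enters. Expanding every $F_1$-coefficient as a polynomial in the transcendental logarithms $\log u_i$ with coefficients in $F$, using $(\log u_i)'=u_i'/u_i\in F$, and comparing the coefficients of the monomials in the $\log u_i$ from the top degree downwards, one checks that all logarithmic summands may be rewritten using only logarithms of elements of $F$, and that the ``mixed'' terms of the shape $\log u_i\cdot(\,\cdot\,)$ which would otherwise obstruct the descent are accounted for precisely by allowing one extra power of $x$. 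This is the origin of the bound $\deg_x f_i\le n-1$: it is exactly the condition under which the Leibniz expansion $(f_i\log u_i)^{(n)}=\sum_{k=1}^{n}\binom{n}{k}f_i^{(n-k)}(u_i'/u_i)^{(k-1)}$ has all of its terms in $F$. Any residual exact part collapses into the single summand $f^{(n)}$. Case (ii) is the same argument with $C[x]$ replaced by $C$; the only new feature is that an $(n-1)$-fold antiderivative of a constant need not lie in $F$, which is exactly what forces the extra summand $c$ into the formula.

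For the \emph{if} direction one constructs $\eta$ directly. In case (i), given a representation of $g$ as in the statement, set $\eta:=f+\sum_{i=1}^{m}f_i\log u_i\in F(\log u_1,\dots,\log u_m)$; taking the $u_i$ to be free modulo $C^{\ast}$ (which does not affect the form of $g$) makes this field elementary over $F$ with no new constants, and a Leibniz computation using $f_i^{(n)}=0$ yields $\eta^{(n)}=g$. In case (ii) one adjoins in addition an antiderivative $t$ of $1$ (this accounts for the summand $c$) and takes $\eta:=f+\tfrac{c}{n!}t^{n}+\sum_ic_i\log u_i$, for which again $\eta^{(n)}=g$. For context one may observe that, by the footnote, $\eta$ is in any case an iterated integral of $F$, so $F\langle\eta\rangle$ is a Picard--Vessiot extension of $F$ with unipotent differential Galois group by Theorem~\ref{intro-main}; the present theorem records the sharper restriction on $g$ forced by requiring $\eta$ to lie in an \emph{elementary} extension, and it is this restriction that the explicit formulas make precise.
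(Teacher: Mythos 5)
Your route is genuinely different from the paper's in part (i) (the paper does not induct on $n$ there), but the step you yourself identify as the main obstacle — the descent from $F_1=F(\log u_1,\dots,\log u_m)$ back to $F$ — is exactly the step you do not prove, and it is the technical core of the theorem. As written, ``expanding every $F_1$-coefficient as a polynomial in the logarithms and comparing coefficients'' skips three nontrivial points: elements of $F_1$ are rational functions, not polynomials, in the $\log u_j$, so polynomial expansions need justification; the inductive hypothesis over $F_1$ produces logarithms of elements of $F_1$, and converting logarithmic derivatives of rational functions of the $\log u_j$ into logarithms of elements of $F$ plus controlled corrections is a Rosenlicht-type lemma, not a coefficient comparison; and the degree bound $\deg_x f_i\le n-1$ must be tracked through all of this. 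The paper avoids the descent entirely: via Theorem~\ref{antiderivative-PV-commutative} and Proposition~\ref{Group-Extension} it knows $F\langle\eta\rangle$ is Picard--Vessiot with commutative unipotent group, embeds it in $F(\log u_1,\dots,\log u_m)$ with the logarithms algebraically independent (Kolchin), and then plays the two Galois actions $\sigma(\eta)=\eta+P_\sigma(x)$ and $\sigma(\log u_m)=\log u_m+c_{m,\sigma}$ against the expansion of $\eta$ to force $\eta=f_0+f_1\log u_1+\dots+f_m\log u_m$ with $f_i\in C[x]$, $\deg f_i\le n-1$, in one stroke. So for (i) your plan is a possible alternative, but its key lemma is missing, not merely deferred.

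For part (ii) there is a concrete false step: the claim that $F_1=F(\log u_1,\dots,\log u_m)$ ``still contains no element of derivative $1$'' is wrong in general — if some $u_i\in F$ with $u_i'=u_i$ occurs among the $u_i$ delivered by Liouville's theorem (nothing excludes this), then $\log u_i$ has derivative $1$. Then your induction must invoke case (i) over $F_1$, and you have no mechanism for collapsing the resulting $C[x]$-coefficients into the constants $c_i$ and the single constant $c$ of the statement. This is precisely the difficulty the paper's induction for (ii) is organized around: it allows an element with derivative $1$ to appear (Proposition~\ref{existenceofxinF(y)} manufactures one inside $F(y)$), pins down iterated integrals lying in $F(x)$ via Proposition~\ref{itofF(x)} (giving $\gamma=P+f$ with $P\in C[x]$, $f\in F$), and uses Rosenlicht's Lemma to force the $u_i$ into $F$. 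Two smaller points: your ``if'' direction for (i) yields $\eta^{(n)}$ with binomial coefficients $\binom{n}{j}$ (you even write them out), which does not literally match the displayed formula — a discrepancy already present between the paper's statement and its own Example 5.1 — and in (ii) your construction uses a primitive $t$ of $1$, but $F(t)$ is not an elementary extension of $F$ unless $t$ can be realized as a logarithm of an element of $F$; note the paper only proves the ``only if'' direction in Theorem~\ref{genofliouville}, so here you are attempting more than the paper does, but the attempt as given has this gap.
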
    
	
In \cite[Section 7.2]{Roq-Sin}, it was shown that if a Picard-Vessiot extension $E$ of $F$ is generated as a differential field over $F$ by an iterated integral of $F$, then the differential Galois group of $E$ over $F$ is a unipotent algebraic group. Thus, Theorem~\ref{intro-main} (\ref{commutative-itint}) is a refinement of their result.

Using Theorem~\ref{intro-main}, we study stability problems in symbolic integration (see \cite{chen2022stability}) and prove Theorem~\ref{genofliouvilleintroduction}, which is a generalization of Liouville's theorem (cf.~\cite{rosenlicht1968liouville}) in integration in finite terms. 

Given a unipotent linear algebraic group $U$, Kovacic presented in \cite{KovacicInvProb} a method to construct a linear differential equation having $U$ as its differential Galois group.
His equation is a matrix differential equation defined by an element of the Lie algebra of $U$. Using his result, we produce a method of constructing $\mathbf f \in (F^*)^{n}$ such that the differential Galois group of $\mathcal L_\mathbf{f}$ is isomorphic to $U.$

	%%%%%%%%%%%%%%%%%%%%%%%%%%%%%%%%%%%%%%%%%%%%%%%%%%%%%%%%%%
	%%%%%%%%%%%%%%%%%%%%%%%%%%%%%%%%%%%%%%%%%%%%%%%%%%%%%%%%%%
	\section{picard-vessiot extensions with unipotent differential galois groups.}  
    
    Throughout this section, $E$ denotes a differential field extension of $F$  having $C$ as its field of constants.
    The group of all differential $F-$automorphisms of $E$, that is, the group of all $F$-automorphisms of $E$ commuting with the derivation, is called the \emph{differential Galois group} and is denoted by 
	$$\mathscr G(E|F):= \{\sigma \in \mathrm{Aut}(E|F)\ | \ \sigma(x')=\sigma(x)' \ \text{for all} \ x\in E\}.$$
	Let 
    $$T(E|F):=\{x\in E\ | \ \mathcal L (x)=0 \ \text{for some} \ \mathcal{L} \in F[\partial]\}.$$ 
    It is well known that $T(E|F)$ is a differential $F$-algebra \cite[Corollary 1.38]{MvdP03}. 
    For any $\sigma\in \mathscr G(E|F)$ we observe that $\mathcal L(\sigma(x))=\sigma(\mathcal L(x))$ for all $x\in E$ and $\mathcal L\in F[\partial].$ 
    Thus, $\mathscr G(E|F)$ is also a group of differential $F-$automorphisms of $T(E|F).$  
    If $E$ is a Picard-Vessiot extension of $F,$ then $T(E|F)$ is called a \emph{Picard-Vessiot ring} of $E$ and its field of fractions is $$\mathrm{Frac}(T(E|F))=E.$$

 In the next two propositions, for convenience and easy reference,  we shall state several fundamental results from the differential Galois theory.
 
	\begin{proposition} \label{prop:fundamentalresults1} $\,$
    \begin{enumerate}[(i)]
		\item  \label{item1:fundamentalresults1}
        Let $F(x)$ be the rational function field over $F$ in the variable $x$ and extend the derivation of $F$ to $F(x)$ by defining $x'=g$ for some $g \in F.$ Then the field of constants of $F(x)$ is the same as the field of constants of $F$, if and only if $f'\neq g$ for all $f \in F$ (cf.~\cite[Theorem 1.3]{vrs-20}). 
		\item Let $E=F(\zeta_1,\dots, \zeta_m),$ where $\zeta'_1\in F$ and $\zeta'_i \in F(\zeta_1,\dots,\zeta_{i-1})$  for each $2\leq i\leq m$. Then, there are $F-$algebraically independent elements $\eta_1,\dots, \eta_n $ among $\zeta_1,\dots, \zeta_m$ such that $E=F(\eta_1,\dots, \eta_n)$ and that $\eta'_1\in F$ and $\eta'_i\in F(\eta_1,\dots, \eta_{i-1})$  for each $2\leq i\leq n$  (cf.~\cite[Theorem 2.1]{vrs-10}).
	\end{enumerate}
		\end{proposition}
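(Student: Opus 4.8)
Both statements are classical and the authors cite references for them; I outline how I would prove them from scratch.

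\textbf{Part (i).} Write $C$ for the field of constants of $F$ and recall that $x$ is transcendental over $F$; if $g=0$ the equivalence is trivial (both sides fail), so assume $g\neq 0$. The ``only if'' direction is the easy one: if $f\in F$ satisfies $f'=g$, then $(x-f)'=0$, so $x-f$ is a constant of $F(x)$ that does not lie in $F\supseteq C$, hence the constant field is strictly larger. For the converse I would prove the contrapositive: starting from a constant $c\in F(x)\setminus C$ — which, since $C\subseteq F$, lies outside $F$ — I would produce $f\in F$ with $f'=g$. Writing $c=P/Q$ with $P,Q\in F[x]$ coprime and $Q$ monic, the one computation to record is that for $h\in F[x]$ the derivative $h'$ equals the polynomial obtained by differentiating the coefficients of $h$ plus $g\,\partial_x h$, so differentiation never raises the $x$-degree. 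If $\deg_x Q=0$ then $c=\sum_{i=0}^{N}a_ix^{i}$ with $N\geq 1$, and comparing the coefficients of $x^{N}$ and then $x^{N-1}$ in $c'=0$ yields first $a_N\in C$ and then $g=(-a_{N-1}/(Na_N))'$. If $\deg_x Q\geq 1$, then $P'Q=PQ'$ together with $\gcd(P,Q)=1$ forces $Q\mid Q'$, and the degree bound forces $Q'=0$; comparing the leading two coefficients of $Q$ then again exhibits $g$ as the derivative of an element of $F$, a contradiction.

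\textbf{Part (ii).} The plan is to delete redundant generators one at a time until the survivors are algebraically independent. The key lemma is: \emph{if $L$ is a differential subfield of $E$ with $F\subseteq L$ and $\theta\in E$ is algebraic over $L$ with $\theta'\in L$, then $\theta\in L$.} To prove it, let $p(T)=\sum_{i=0}^{d}a_iT^{i}$ with $a_d=1$ be the minimal polynomial of $\theta$ over $L$; differentiating $p(\theta)=0$ and using $\theta'\in L$ gives a polynomial relation for $\theta$ over $L$ of degree $\leq d-1$, which must therefore be the zero relation, and its coefficient of $\theta^{d-1}$ forces $\theta'=-a_{d-1}'/d$. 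Hence $(\theta+a_{d-1}/d)'=0$, and since the constant field of $E$ is $C\subseteq F\subseteq L$ we get $\theta+a_{d-1}/d\in L$, so $\theta\in L$.

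With the lemma in hand I would induct on $m$. If every $\zeta_i$ is transcendental over $F(\zeta_1,\dots,\zeta_{i-1})$, then $\zeta_1,\dots,\zeta_m$ are algebraically independent over $F$ and we are done with $\eta_i=\zeta_i$. Otherwise choose the least $k$ with $\zeta_k$ algebraic over $L:=F(\zeta_1,\dots,\zeta_{k-1})$; this $L$ is a differential field since $\zeta_1'\in F$ and $\zeta_j'\in F(\zeta_1,\dots,\zeta_{j-1})\subseteq L$ for $j<k$, and $\zeta_k'\in L$ by hypothesis, so the lemma yields $\zeta_k\in L=F(\zeta_1,\dots,\zeta_{k-1})$. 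Then deleting $\zeta_k$ changes none of the subfields $F(\zeta_1,\dots,\zeta_{j-1})$ for $j>k$, so the shorter tuple $(\zeta_1,\dots,\zeta_{k-1},\zeta_{k+1},\dots,\zeta_m)$ is again an admissible tower generating $E$, and the inductive hypothesis applies.

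The only place where anything beyond bookkeeping happens is the lemma in part (ii), and within it the step where the standing hypothesis that $E$ has constant field exactly $C$ is invoked: this is precisely what forbids a genuinely algebraic generator from surviving, and without it the statement is false. The remaining small edge cases ($d=1$; $N\leq 1$; $\deg_x Q=0$; $g=0$) are immediate and I would treat them separately at the start of each argument.
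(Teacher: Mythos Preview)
The paper does not prove this proposition at all; it simply records the two statements and cites \cite[Theorem~1.3]{vrs-20} and \cite[Theorem~2.1]{vrs-10} for them. Your self-contained arguments are correct and are essentially the standard ones. In part~(i) the case split on $\deg_x Q$ together with the observation that monicity of $Q$ forces $\deg_x Q' < \deg_x Q$ (so $Q\mid Q'$ implies $Q'=0$) is exactly what is needed. In part~(ii) your lemma---that an element algebraic over a differential subfield $L$ with derivative in $L$ must already lie in $L$, provided no new constants appear---is the usual engine for this kind of tower reduction, and you rightly flag that the standing hypothesis $C_E=C$ from the beginning of Section~2 is indispensable here. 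One cosmetic point: at the end of the $\deg_x Q\geq 1$ case you write ``a contradiction'', but since you announced you were arguing the contrapositive, exhibiting $g$ as a derivative is simply the desired conclusion, not a contradiction.
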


\begin{proposition}	\label{Group-Extension} Let $E$ be a Picard-Vessiot extension of $F.$
	\begin{enumerate}[(i)]
	\item \label{PV-IAE} 
    The differential Galois group $\mathscr G(E|F)$ is isomorphic to a unipotent algebraic group if and only if  
    $E=F(\eta_1,\dots, \eta_n)$, where $\eta'_1\in F$ and $\eta'_i\in F(\eta_1,\dots,\eta_{i-1})$ for each $2\leq i\leq n;$
    in which case $\eta_1,\dots,\eta_n$ can also be chosen from $T(E|F)$ so that $T(E|F)=F[\eta_1,\dots,\eta_n]$ and $\eta'_1\in F$ and $\eta'_i\in F[\eta_1,\dots,\eta_{i-1}]$ for each $2\leq i\leq n.$ 
	\item \label{PV-AE} 
    The differential Galois group $\mathscr G(E|F)$ is isomorphic to a commutative unipotent algebraic group if and only if $T(E|F)=F[\eta_1,\eta_2,\dots,\eta_n]$ with $\eta'_i \in F$ for each $1\leq i\leq n.$ 
\end{enumerate}
\end{proposition}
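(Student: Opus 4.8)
The plan is to reduce both statements to the structure theory of unipotent algebraic groups over the algebraically closed field $C$, together with the torsor/Picard-Vessiot dictionary. For part~(\ref{PV-IAE}), recall that a connected linear algebraic group is unipotent if and only if it has a central composition series with all quotients isomorphic to $\mathbb{G}_a$; dually, such a group is built up by successive extensions by $\mathbb{G}_a$. I would argue by induction on $\dim \mathscr G(E|F)$. For the forward direction, pick a normal subgroup $N \trianglelefteq \mathscr G(E|F)$ with $\mathscr G(E|F)/N \cong \mathbb{G}_a$; by the Galois correspondence the fixed field $E^N$ is a Picard-Vessiot extension of $F$ with group $\mathbb{G}_a$, hence $E^N = F(\eta_1)$ with $\eta_1' \in F$ (the standard description of a $\mathbb{G}_a$-extension as adjunction of a primitive element; here one uses that $\mathrm{Hom}(\mathbb{G}_a,\mathbb{G}_m)=0$ and the cohomological triviality of $\mathbb{G}_a$-torsors). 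Then $E$ is a Picard-Vessiot extension of $E^N$ with unipotent group $N$ of smaller dimension, and the induction hypothesis applied over the base $E^N=F(\eta_1)$ produces $\eta_2,\dots,\eta_n$ with $\eta_i' \in F(\eta_1,\dots,\eta_{i-1})$. For the converse, a tower $E = F(\eta_1,\dots,\eta_n)$ with $\eta_i' \in F(\eta_1,\dots,\eta_{i-1})$ exhibits $E$ as an iterated extension by primitive elements; each step has differential Galois group a subgroup of $\mathbb{G}_a$ (either $\mathbb{G}_a$ or trivial), so $\mathscr G(E|F)$ is an iterated extension of subgroups of $\mathbb{G}_a$ and therefore unipotent.

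For the refinement that $\eta_1,\dots,\eta_n$ may be taken inside the Picard-Vessiot ring $T(E|F)$ with $\eta_i'\in F[\eta_1,\dots,\eta_{i-1}]$, I would invoke Proposition~\ref{prop:fundamentalresults1}(ii): starting from any generating tower of primitive-type elements, that result lets us thin it out to an $F$-algebraically independent subtower still generating $E$ as a differential field, with $\eta_i'\in F(\eta_1,\dots,\eta_{i-1})$. Algebraic independence means $F[\eta_1,\dots,\eta_n]$ is a polynomial ring, and one checks it is $\mathscr G(E|F)$-stable and has fraction field $E$ with no new constants, so by the characterization of the Picard-Vessiot ring it equals $T(E|F)$; clearing denominators (which are units because the ring is a polynomial ring over the differentially generated pieces, after possibly enlarging the tower) upgrades $\eta_i'\in F(\eta_1,\dots,\eta_{i-1})$ to $\eta_i'\in F[\eta_1,\dots,\eta_{i-1}]$.

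For part~(\ref{PV-AE}), the point is that a unipotent group is commutative if and only if it is isomorphic as an algebraic group to a product $\mathbb{G}_a^n$ (in characteristic zero, a commutative unipotent group is a vector group). The reverse direction is immediate: if $T(E|F) = F[\eta_1,\dots,\eta_n]$ with each $\eta_i' \in F$, then each $\eta_i$ is a primitive element over $F$, every $\sigma \in \mathscr G(E|F)$ acts by $\sigma(\eta_i) = \eta_i + c_i(\sigma)$ with $c_i(\sigma) \in C$, and $\sigma \mapsto (c_1(\sigma),\dots,c_n(\sigma))$ embeds the group into $\mathbb{G}_a^n$, which is commutative unipotent. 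For the forward direction, suppose $\mathscr G(E|F) \cong \mathbb{G}_a^n$ as an algebraic group; then by the inverse-problem-free structure of such extensions (equivalently, by choosing coordinates on the torsor) $E$ is generated by $n$ primitive elements over $F$, and part~(\ref{PV-IAE}) together with algebraic independence gives the polynomial generation of $T(E|F)$. The delicate point here is showing the generators can all be taken to be genuine primitives over $F$ itself (i.e. $\eta_i' \in F$), not merely over the preceding subfield: this follows from the fact that the character group of $\mathbb{G}_a^n$ splits the coordinate ring as a sum of one-dimensional $\mathscr G$-submodules spanned by elements $\eta_i$ on which the group acts by translation, forcing $\eta_i' \in F$.

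The main obstacle I anticipate is the passage, in both parts, between the abstract group-theoretic statement and the explicit tower of primitive extensions while keeping control of the constants — specifically, ensuring at each stage that adjoining $\eta_i$ with $\eta_i' \in F(\eta_1,\dots,\eta_{i-1})$ does not introduce new constants, which is exactly where Proposition~\ref{prop:fundamentalresults1}(i) enters, and in the commutative case, arranging that the splitting into one-dimensional pieces really does produce derivatives landing in $F$ rather than in a larger subfield.
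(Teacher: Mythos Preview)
The paper's own proof is a one-line citation to \cite[Propositions~6.7, 6.8]{mag-book} and \cite[Theorem~4.4]{KRS-24}, so there is little to compare against directly. Your inductive strategy via a normal subgroup with $\mathbb{G}_a$ quotient, and the identification of commutative unipotent groups with vector groups in characteristic zero, is the standard approach underlying those references.

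The genuine gap is in the Picard-Vessiot ring refinement in~(\ref{PV-IAE}). Your proposed characterization of $T(E|F)$ as ``$\mathscr G(E|F)$-stable with fraction field $E$ and no new constants'' does not single out $T(E|F)$, and the ``clearing denominators'' step cannot upgrade $\eta_i' \in F(\eta_1,\dots,\eta_{i-1})$ to $\eta_i' \in F[\eta_1,\dots,\eta_{i-1}]$: there is no operation on $\eta_i$ that achieves this, and an $\eta_i$ produced by Proposition~\ref{prop:fundamentalresults1}(ii) with $\eta_i'$ genuinely rational in the previous $\eta$'s need not lie in $T(E|F)$ at all (membership in $T(E|F)$ means satisfying a linear differential equation over $F$, which is not automatic here). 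The correct route is to build the tower directly from a central series $\mathscr G(E|F)=G_0\rhd G_1\rhd\cdots\rhd G_n=1$ together with the torsor isomorphism $T(E|F) \cong F \otimes_C C[\mathscr G(E|F)]$: since $\mathscr G(E|F)$ is unipotent this is a polynomial ring over $F$, the filtration by the subrings $F\otimes_C C[\mathscr G/G_i]$ gives $T(E^{G_i}|F) = F[\eta_1,\dots,\eta_i]$ with $E^{G_i}=E^{G_{i-1}}(\eta_i)$, and then $\eta_i' \in T(E|F) \cap E^{G_{i-1}} = T(E^{G_{i-1}}|F) = F[\eta_1,\dots,\eta_{i-1}]$ falls out automatically. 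A minor slip in~(\ref{PV-AE}): $\mathbb{G}_a^n$ has trivial character group, so that phrase is a misnomer; you mean the decomposition of $C[\mathbb{G}_a^n]$ under the translation action, and with that correction your conclusion that the resulting generators have $\eta_i'\in E^{\mathscr G(E|F)}=F$ is right.
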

\begin{proof} (\ref{PV-IAE}) and (\ref{PV-AE}) follow from \cite[Propositions 6.7, 6.8]{mag-book} and \cite[Theorem 4.4]{KRS-24}). 
\end{proof}

Using these results, we now prove Theorem \ref{intro-main} (\ref{commutative-itint}), which also contains a refinement of \cite[Lemma 7.5]{Roq-Sin}.	
	\begin{theorem} \label{antiderivative-PV-commutative} 
    Assume that $F$ contains an element $x$ with $x'=1.$ Then $E$ is generated as a differential field by an iterated integral of $F$, if and only if $E$ is a Picard-Vessiot extension of $F$ with $\mathscr G(E|F)$ isomorphic to a commutative unipotent linear algebraic group. 
	\end{theorem}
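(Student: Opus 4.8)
The plan is to prove the two implications separately, using throughout the reformulation recorded in the footnote: an element $\eta$ is an iterated integral of $F$ exactly when $\eta^{(l)}\in F$ for some $l\geq 1$, and when moreover $E=F\langle\eta\rangle$ this means $E=F(\eta,\eta',\dots,\eta^{(l-1)})$. The element $x$ with $x'=1$ is used in both directions as the ``reservoir'' of polynomial solutions.

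First, suppose $E$ is Picard--Vessiot over $F$ with $\mathscr G(E|F)$ commutative unipotent; I want an iterated integral generating $E$. By Proposition~\ref{Group-Extension}(\ref{PV-AE}) I may write $T(E|F)=F[\eta_1,\dots,\eta_n]$ with $\eta_i'\in F$ for every $i$, and I set
$$\eta:=\sum_{i=1}^{n}\frac{x^{i-1}}{(i-1)!}\,\eta_i.$$
Differentiating and absorbing into $F$ the terms $\frac{x^{i-1}}{(i-1)!}\eta_i'$ (which lie in $F$, as $x,\eta_i'\in F$) yields, by an immediate induction, $\eta^{(k)}=\sum_{j=1}^{n-k}\frac{x^{j-1}}{(j-1)!}\,\eta_{j+k}+f_k$ for some $f_k\in F$; in particular $\eta^{(n-1)}=\eta_n+f_{n-1}$, whence $\eta^{(n)}\in F$, so $\eta$ is an iterated integral of $F$. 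Finally $F\langle\eta\rangle=E$: from $\eta^{(n-1)}$ one recovers $\eta_n$, then reading off $\eta^{(n-2)}=\eta_{n-1}+x\eta_n+f_{n-2}$, and so on down to $\eta$ itself, one peels off $\eta_{n-1},\dots,\eta_1$ one at a time (the remaining $\eta_j$ appear with coefficients in $F$ multiplying generators already recovered). Hence $E=F(\eta_1,\dots,\eta_n)\subseteq F\langle\eta\rangle\subseteq E$.

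Conversely, suppose $E=F\langle\eta\rangle$ with $\eta^{(l)}=g\in F$. If $\eta\in F$ then $E=F$ and the Galois group is trivial, so I may assume $\eta\notin F$; then $g\neq 0$ (if $g=0$, repeated integration against powers of $x\in F$ forces $\eta\in C[x]\subseteq F$). Take $\mathbf f:=(1,1/g,1,\dots,1)\in(F^*)^{l+1}$; then $\mathcal L_{\mathbf f}=\partial\,\frac1g\,\partial^{\,l}$ has order $l+1$, and $\mathcal L_{\mathbf f}(y)=0$ is equivalent to $y^{(l)}=cg$ for some $c\in C$, so its solution space in $E$ is spanned over $C$ by $\eta,1,x,\frac{x^2}{2!},\dots,\frac{x^{l-1}}{(l-1)!}$. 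Since $\eta\notin F\supseteq C[x]$ these $l+1$ elements are $C$-linearly independent, they lie in $E$, and their differential closure over $F$ is $F(\eta,\eta',\dots,\eta^{(l-1)})=E$; together with $C_E=C$ this exhibits $E$ as the Picard--Vessiot extension of $F$ for $\mathcal L_{\mathbf f}$. Now each $\sigma\in\mathscr G(E|F)$ fixes $1,x,\dots$ and sends $\eta$ to a solution, so $\sigma(\eta)=c_\sigma\eta+q_\sigma$ with $c_\sigma\in C$ and $q_\sigma\in C[x]_{<l}$; applying $\sigma$ to $\eta^{(l)}=g$ gives $c_\sigma=1$, so $\sigma\mapsto q_\sigma$ is an injective homomorphism of $\mathscr G(E|F)$ into $(C[x]_{<l},+)\cong\mathbb G_a^{\,l}$, whose image is a closed subgroup of $\mathbb G_a^{\,l}$, hence a $C$-linear subspace; therefore $\mathscr G(E|F)\cong\mathbb G_a^{\,k}$ is commutative unipotent.

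The computations are routine: the inductive formula for $\eta^{(k)}$ and the back-substitution in the first direction, and the explicit solution space of $\mathcal L_{\mathbf f}$ in the second. The one point needing a little care is, in the converse direction, verifying that the displayed solutions span a space of dimension exactly $l+1$ and generate $E$ as a differential field, so that $E$ genuinely \emph{is} the Picard--Vessiot extension of $\mathcal L_{\mathbf f}$ rather than merely a field containing its solutions; once this is secured, reading off the Galois group as a subspace of $\mathbb G_a^{\,l}$ is immediate. Alternatively, unipotence of $\mathscr G(E|F)$ could be deduced from Proposition~\ref{Group-Extension}(\ref{PV-IAE}) via $E=F(\eta^{(l-1)},\dots,\eta',\eta)$ with $(\eta^{(l-1)})'=g\in F$ and $(\eta^{(l-i)})'=\eta^{(l-i+1)}$, but commutativity still requires the explicit description of the solution space above. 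I do not anticipate a serious obstacle.
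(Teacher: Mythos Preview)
Your proof is correct and follows the same overall architecture as the paper: in the backward direction you take the same weighted sum $\eta=\sum x^{i-1}\eta_i$ (up to harmless factorial normalizations) and argue by the same induction and back-substitution; in the forward direction you use the same differential equation (your $\mathcal L_{\mathbf f}=\partial\,\tfrac1g\,\partial^l$ is exactly the paper's $Y^{(l+1)}-(g'/g)Y^{(l)}=0$ up to a left factor in $F^*$), the same solution space $\mathrm{span}_C\{\eta,1,x,\dots,x^{l-1}\}$, and the same observation that $\sigma(\eta)=c_\sigma\eta+q_\sigma$.

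The one genuine difference is how you get $c_\sigma=1$. The paper first invokes Proposition~\ref{Group-Extension}(\ref{PV-IAE}) to see that $\mathscr G(E|F)$ is unipotent, then argues that $\sigma\mapsto c_\sigma$ is a character into $\mathrm G_m$ and hence trivial. You instead apply $\sigma$ directly to the relation $\eta^{(l)}=g$, obtaining $c_\sigma g=g$ and hence $c_\sigma=1$ since $g\neq 0$. Your route is shorter and more elementary, and it lets you conclude commutativity and unipotence in one stroke by embedding $\mathscr G(E|F)$ faithfully into $(C[x]_{<l},+)\cong\mathbb G_a^{\,l}$, without ever needing the preliminary unipotence step. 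The paper's argument, on the other hand, is a bit more structural and would still work in situations where one does not have such a convenient equation to differentiate. Either way, the substance is the same.
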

	\begin{proof}
		Let $E=F\langle \eta\rangle,$ where $\eta$ is an iterated integral of $F$ with $\eta^{(n)}=:f\in F.$ 
        First, we observe that if $\eta \in F$, then $E=F$ and consequently the differential Galois group is trivial. Therefore, we may assume that $\eta$ is not an element of $F$. In particular, this implies that $f\neq 0$, since otherwise $\eta \in C[x]$. 
        Let
        $$V:={\rm{span}}_C\{\eta, 1, x, x^2,\dots, x^{n-1}\} \subset E$$
        be the $C$-vector space spanned by $\eta, 1, x, x^2,\dots, x^{n-1}$. Then $\mathrm{dim}(V)=n+1$ and $V$  is  the solution space of the linear homogeneous differential equation 
        \begin{equation}\label{diffeqn-ItInt}Y^{(n+1)}-\frac{f'}{f}Y^{(n)}=0 \, .
        \end{equation}
        Since $E=F\langle \eta\rangle=F\langle V\rangle$ and $E$ has by assumption $C$ as its field of constants, $E$ is a  Picard-Vessiot extension of $F.$ 
		
	Observe that $E=F\langle \eta \rangle=F(\eta^{(n-1)}, \eta^{(n-2)},\dots, \eta', \eta).$  Thus,  as noted in Proposition \ref{Group-Extension} (\ref{PV-IAE}), the differential Galois group $\mathscr G(E|F)$ is a unipotent algebraic group. Now we shall prove that $\mathscr G(E|F)$ is a commutative group. 
	
	Since $E=F\langle \eta\rangle,$ the Galois group $\mathscr G(E|F)$ becomes commutative at once if we show that $$\sigma(\tau (\eta))=\tau(\sigma(\eta)),\quad \text{for all } \sigma, \tau\in \mathscr G(E|F).$$
		
         Observe that $V$ is stabilized by the differential Galois group and in particular, for any $\sigma\in \mathscr G(E/F),$ we have
         $$\sigma(\eta)=c_\sigma \eta+f_\sigma,$$ 
         where $c_{\sigma}$ is a constant and $f_\sigma \in F$ is a $C-$linear combination of $1, x,\dots, x^{n-1}.$ 
         Since $\eta \in E \setminus F$ and $\sigma$ is an $F-$automorphism, we conclude that $c_{\sigma}\neq 0$. Since $\mathscr G(E/F)$ acts rationally on $V,$ the map 
         \[
         \phi:\mathscr G(E/F) \to C^*, \quad \sigma \mapsto c_{\sigma}
         \]
         is a morphism of varieties. We are going to show that $\phi$ is a morphism of algebraic groups. Since $f_\sigma \in F$, we have    $\tau(f_\sigma)=f_\sigma$ for all $\sigma, \tau\in \mathscr G(E/F)$ 
		 and so we obtain 
        \[ \sigma(\tau(\eta))=\sigma(c_{\tau} \eta + f_{\tau} ) =c_{\sigma}c_{\tau}\eta+ f_{\tau}+c_{\tau}f_{\sigma} . \]
        Since we also have $\sigma(\tau(\eta))=c_{\sigma\tau}\eta+f_{\sigma\tau}$, 
        we find 
        \begin{eqnarray}
			c_{\sigma}c_{\tau}&=&c_{\sigma\tau} , \label{eqn:algebraicgroupstruction} \\
			f_{\tau}+c_{\tau}f_{\sigma}&=& f_{\sigma\tau} . \nonumber
		\end{eqnarray}
		It follows now from \eqref{eqn:algebraicgroupstruction} that the map 
        $\phi: \mathscr G(E/F)\to \mathrm{G}_m(C)$ is a morphism of algebraic groups. 
        But, since $\mathscr G(E/F)$ is unipotent, $\phi$ must be trivial, that is $c_\sigma=1$ for all $\sigma\in \mathscr G(E/F).$ 
        We conclude that $$\sigma(\tau(\eta))=\eta+f_{\tau}+f_{\sigma}=\eta+f_{\sigma}+f_{\tau}=\tau(\sigma(\eta)$$ 
        for any $\sigma, \tau\in \mathscr{G}(E|F)$, i.e.~$\mathscr G(E|F)$ is a commutative unipotent group. 
		
		Conversely, let $\mathscr G(E|F)$ be a commutative unipotent algebraic group. Then by Proposition \ref{Group-Extension} (\ref{PV-AE}) we have $E=F(\eta_1,\dots,\eta_n)$ with $\eta'_i=f_i\in F$ for each $1\leq i\leq n.$ 
        Let $$\eta:=\eta_1+x\eta_2+\dots+x^{n-1}\eta_n.$$

		One can show now by induction on $1\leq m\leq n-1$ that there are nonzero numbers $c_{1,m},\dots, c_{n-m,m} \in \mathbb{N}$ and an element $g_m\in F$ such that
        \begin{equation*} 
		\eta^{(m)}=g_m+c_{1,m}\eta_{m+1}+c_{2,m}x\eta_{m+2}+\cdots+c_{n-m,m}x^{n-m-1}\eta_n .
        \end{equation*}
        For $m=n-1$ this yields $\eta^{(n-1)}=g_{n-1} + c_{1,n-1}\eta_{n}$ and so  $\eta^{(n)} \in F$. 
        Moreover, one can use the above inductive result to prove that for each $1\leq m\leq n$ we have   
        $$E=F(\eta_1,\dots, \eta_n)=F(\eta_n,\dots, \eta_{m+1}, \eta^{(m-1)},\dots, \eta',\eta).$$
        In particular, for $m=n$ we obtain $ E=F(\eta^{(n-1)}, \eta^{(n-2)},\dots, \eta',\eta)=F\langle \eta\rangle$.	
        \end{proof}
	
	\begin{proposition} \label{itofF(x)}
    Suppose that there is no element $f \in F$ with $f'=1$. Let $F(x)$ be the rational function field over $F$ in the variable $x$ and extend the derivation of $F$ to $F(x)$ by defining $x'=1$. 
    Let $\eta \in F(x)$ be an iterated integral of $F$ with $\eta^{(r)}\in F.$ Then $\eta=f+P$ for some $f \in F$ and $P\in C[x]$ with $\deg(P)\leq r.$
	\end{proposition}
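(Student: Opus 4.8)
The plan is to argue by induction on $r$, with the base case $r=1$ carrying essentially all of the work. For the base case, suppose $\eta\in F(x)$ with $\eta'\in F$, and first rule out denominators. Fix a monic irreducible $p\in F[x]$ and let $v_p$ denote the associated valuation on $F(x)$. By Proposition~\ref{prop:fundamentalresults1}(\ref{item1:fundamentalresults1}) (with $g=1$) the constant field of $F(x)$ is $C$, so $p$, having positive $x$-degree, is not a constant and $p'\neq 0$; since moreover $\deg_x(p')<\deg_x(p)$, this yields $v_p(p')=0$. Writing any $h\in F(x)$ with $v_p(h)=-k<0$ as $h=up^{-k}$ with $v_p(u)=0$, the identity $h'=(u'p-kup')\,p^{-k-1}$ together with $v_p(u'p)\geq 1>0=v_p(kup')$ shows $v_p(h')=v_p(h)-1$. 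Hence a pole of $\eta$ would force a pole of $\eta'$, which is impossible since $\eta'\in F$; so $\eta\in F[x]$, say $\eta=\sum_{j=0}^{m}b_jx^j$ with $b_j\in F$ and $b_m\neq 0$. The coefficient of $x^m$ in $\eta'\in F$ gives $b_m'=0$, so $b_m\in C^*$, and if $m\geq 2$ the coefficient of $x^{m-1}$ gives $b_{m-1}'+mb_m=0$, so that $-b_{m-1}/(mb_m)\in F$ has derivative $1$, contradicting the hypothesis on $F$. Thus $m\leq 1$, and then $b_1'=0$ gives $\eta=b_0+b_1x$ with $b_0\in F$ and $b_1\in C$, which is the desired form with $\deg(P)\le 1$.

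For the inductive step, let $r\geq 2$ and $\eta\in F(x)$ with $\eta^{(r)}\in F$. Applying the induction hypothesis to $\eta'\in F(x)$, whose $(r-1)$-st derivative is $\eta^{(r)}\in F$, gives $\eta'=g+Q$ with $g\in F$ and $Q\in C[x]$ of degree $\leq r-1$. Pick $R\in C[x]$ with $R'=Q$ and $\deg(R)\leq r$; then $\tilde\eta:=\eta-R\in F(x)$ satisfies $\tilde\eta'=g\in F$, so by the base case $\tilde\eta=f+cx$ with $f\in F$ and $c\in C$. Hence $\eta=f+(cx+R)$ with $cx+R\in C[x]$ of degree $\leq r$, as claimed.

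The main obstacle is the base case, specifically the two uses of the hypothesis that no element of $F$ has derivative $1$: first, via Proposition~\ref{prop:fundamentalresults1}(\ref{item1:fundamentalresults1}), to guarantee that irreducible polynomials over $F$ are not constants of $F(x)$ --- which is exactly what makes differentiation strictly worsen every pole --- and second, to push the $x$-degree of $\eta$ down to $1$. The one computation requiring care is the valuation identity $v_p(h')=v_p(h)-1$ for $v_p(h)<0$, and it is routine once $v_p(p')=0$ is in hand.
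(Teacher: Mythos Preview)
Your proof is correct, and it takes a somewhat different route from the paper's. The paper argues directly for arbitrary $r$: it first invokes Picard--Vessiot theory (noting that $T(F(x)\mid F)=F[x]$) to place $\eta$ in $F[x]$, writes $\eta=\sum_{i=0}^{n}f_i x^i$, and then shows that all $f_i$ with $i\geq 1$ are constants by taking the largest index $l$ with $f_l\notin C$, subtracting off the higher (constant) terms, and locating an integer $s$ with $f_l^{(s)}=0$, $f_l^{(s-1)}\neq 0$, so that $(f_l^{(s-2)}/f_l^{(s-1)})'=1$. You instead induct on $r$, pushing all the content into the case $r=1$: there you replace the appeal to Picard--Vessiot theory by an elementary valuation computation (differentiation strictly lowers $v_p$ at a pole once one knows $v_p(p')=0$), and the degree bound by the direct observation that the $x^{m-1}$-coefficient of $\eta'$ produces an element of $F$ with derivative~$1$ whenever $m\geq 2$. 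The inductive step is then a clean reduction to the base case.

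What each approach buys: your argument is more self-contained (no Picard--Vessiot ring needed) and the degree analysis is simpler because you only ever confront $\eta'\in F$; the paper's argument avoids induction and handles the polynomial part in one stroke, at the cost of a slightly more delicate ``largest non-constant coefficient'' manoeuvre. Both hinge on the same use of the hypothesis that no $f\in F$ has $f'=1$, once to control constants of $F(x)$ and once to extract the contradiction.
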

	
	\begin{proof} 
    The assumption on $F$ together with Proposition~\ref{prop:fundamentalresults1} \eqref{item1:fundamentalresults1} imply that the field of constants of $F(x)$ is $C$. Then the field $F(x)$ is a Picard-Vessiot extension of $F$ for $y''=0$ and it can be seen that $T(F(x)|F)=F[x]$.
   Since $\eta$ is an iterated integral, we must have $\eta \in F[x]$, that is, 
    $$\eta=f_0+f_1x+\cdots+f_nx^n,$$
	with $f_0,\dots,f_n\in F$ and $f_n\neq 0.$  We claim that $f_1,\dots, f_n$ are constants. 
    Suppose on the contrary that there is an index $i$ such that $f_i$ is not a constant. 
    Let $l$ be the largest index $1\leq l\leq n$ such that $f_l$ is not a constant. Note that any element of $C[x]$ is an iterated integral and a sum of iterated integrals is again an iterated integral. Since $-f_nx^n-\dots-f_{l+1}x^{l+1} \in C[x]$ it follows that   
    $$\tilde{\eta}:= f_0+f_1x+\cdots+f_lx^l = \eta -f_nx^n-\dots-f_{l+1}x^{l+1} $$
    is an iterated integral of $F$ with $f_l$ not a constant. Let $\tilde{r}$ be the smallest integer such that $\tilde{\eta}^{(\tilde{r})} \in F.$ Then, we have 
    $$ \tilde{\eta}^{(\tilde{r})}=f^{(\tilde{r})}_lx^l+ \left(lf^{(\tilde{r}-1)}_l+f^{(\tilde{r})}_{l-1}\right)x^{l-1}+\dots \in F.$$	
	Therefore, $f^{(\tilde{r})}_l=0$ and since $f_l$ is not a constant, we have that $\tilde{r}\geq 2.$
    Thus, there must exist an integer $s$ with $2\leq s\leq \tilde{r}$ such that $f^{(s)}_l=0$ and $f^{(s-1)}_l\neq 0$. For this particular $s$ we have that 
    $$\left( f^{(s-2)}_l/f^{(s-1)}_l\right)'=1 .$$
    This contradicts our assumption that $F$ has no elements whose derivative is equal to $1.$ Therefore, the elements $f_1, \dots, f_n$ must be constants. Thus, we have shown that $\eta=f_0+P,$ where $P\in C[x].$ Since $\eta^{(r)}\in F$ and $x$ is transcendental over $F,$ it readily follows that $\deg_x(P)\leq r.$\end{proof}

	Before we prove Theorem~\ref{intro-main} \eqref{noncommutative-giint}, we present the following remarks.
		\begin{remark}\label{integralsofiteratedintegrals}
		Let $\eta\in E$ and  $\mathbf g=(g_1,g_2,\dots, g_l)\in (F^*)^{l}.$ If $\mathcal{L}_{\mathbf g}(\eta) \in E$   is a generalized iterated integral, then so is $\eta.$ 
        Indeed, since $\mathcal{L}_{\mathbf g}(\eta)$ is a generalized iterated integral, there exists $\mathbf h=(h_1,h_2,\dots, h_m) \in (F^*)^{m}$ such that $\mathcal L_{\mathbf h}(\mathcal{L}_{\mathbf g}(\eta))=0$ and so for $$\mathbf f:=(h_1,h_2,\dots,h_m,g_1,\dots,g_l) \in (F^*)^{m+l}$$ we have $$\mathcal L_{\mathbf f}(\eta)=\mathcal L_{\mathbf h}(\mathcal{L}_{\mathbf g}(\eta))=0.$$
	\end{remark} 
	
	\begin{remark}\label{gii-iae}
		Let $\eta\in E$ be a generalized iterated integral of $F$ with $\mathcal L_\mathbf{f}(\eta)=0$ for $\mathbf f:=(f_1 , f_2,\dots, f_l)\in (F^*)^l.$ 
        For $1 \leq i\leq l$ we define $\mathbf{f}_i:=(f_i,\dots, f_l)\in (F^*)^{l-(i-1 )}$ and observe that  
        \[
        \mathcal L_{\mathbf f_1}(\eta)=\mathcal L_{\mathbf f}(\eta)=0 \quad \text{and}\quad (1/f_i)\mathcal L_{\mathbf f_i}(\eta)= \left(\mathcal L_{\mathbf f_{i+1}}(\eta)\right)'.
        \] 
	Thus, in particular, we have $(\mathcal L_{\mathbf f_2}(\eta))'=0$ and   $(1/f_l)\mathcal L_{\mathbf f_l}(\eta)=\eta'.$  
        Now for $1\leq i\leq l,$ let $\eta_i:=\mathcal L_{\mathbf f_i}(\eta)$ and $\eta_{l+1}:=\eta$. Observe that 
        \[
        \eta_1=0, \quad \eta_2\in C, \quad \eta'_3=(1/f_2)\eta_2\in F, \quad  \eta'_i\in F[\eta_3,\dots, \eta_{i-1}] \ \text{for} \ 3<i\leq l+1
        \]
        and that 
        $$F\langle \eta\rangle=F\left(\eta_3,\dots, \eta_{l+1} \right).$$
	\end{remark}

	\begin{theorem}\label{Uni-PVring}
		For a Picard-Vessiot extension $E$ of $F,$ the following statements are equivalent.  
		\begin{enumerate}[(i)]
		\item \label{diffeq-iae} There is an $\mathbf f\in (F^*)^l$ such that $E$ is a Picard-Vessiot extension of $F$ for $\mathcal L_{\mathbf f}\in F[\partial].$
		\item \label{group-iae}The differential Galois group $\mathscr G(E|F)$ is a unipotent linear algebraic group 
		\item \label{pvring-iae}The Picard-Vessiot ring $T(E|F)$ of $E$ is the set of all generalized iterated integrals in $E$ of $F.$ 
		\end{enumerate}
	\end{theorem}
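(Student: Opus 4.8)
The plan is to prove the cycle of implications $(\ref{diffeq-iae}) \Rightarrow (\ref{group-iae}) \Rightarrow (\ref{pvring-iae}) \Rightarrow (\ref{diffeq-iae})$, leaning on the structural results in Proposition~\ref{Group-Extension} together with Remarks~\ref{integralsofiteratedintegrals} and \ref{gii-iae}.

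For $(\ref{diffeq-iae}) \Rightarrow (\ref{group-iae})$: suppose $E$ is a Picard-Vessiot extension for $\mathcal L_{\mathbf f}$ with $\mathbf f = (f_1,\dots,f_l)$. I would exhibit an explicit $C$-basis of the solution space built by iterated integration, exactly in the shape displayed in Theorem~\ref{intro-main}(\ref{noncommutative-giint}): set $w_1 = 1/f_l$, $w_2 = (1/f_l)\int (1/f_{l-1})$, and so on, and check directly that $\mathcal L_{\mathbf f}(w_j) = 0$ by peeling off the operators $f_i\partial$ one at a time (this is the computation already encoded in Remark~\ref{gii-iae} read in reverse). Then $E$ is generated over $F$ by these $w_j$ and their derivatives, and using the relation $(1/f_i)\mathcal L_{\mathbf f_i}(w) = (\mathcal L_{\mathbf f_{i+1}}(w))'$ one sees that $E = F(\zeta_1,\dots,\zeta_m)$ with $\zeta_1' \in F$ and $\zeta_i' \in F(\zeta_1,\dots,\zeta_{i-1})$; Proposition~\ref{Group-Extension}(\ref{PV-IAE}) then gives that $\mathscr G(E|F)$ is unipotent.

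For $(\ref{group-iae}) \Rightarrow (\ref{pvring-iae})$: assume $\mathscr G(E|F)$ is unipotent. One inclusion is easy: any generalized iterated integral $\eta \in E$ satisfies $\mathcal L_{\mathbf f}(\eta) = 0$ for some $\mathbf f \in F[\partial]$, hence $\eta \in T(E|F)$ by definition of $T(E|F)$. For the reverse inclusion, by Proposition~\ref{Group-Extension}(\ref{PV-IAE}) we may write $T(E|F) = F[\eta_1,\dots,\eta_n]$ with $\eta_1' \in F$ and $\eta_i' \in F[\eta_1,\dots,\eta_{i-1}]$. The key point is that each $\eta_i$ is a generalized iterated integral of $F$: I would argue by induction on $i$, using that $\eta_i' $ is a polynomial expression in $\eta_1,\dots,\eta_{i-1}$, each of which is a generalized iterated integral, that products and sums of generalized iterated integrals are again generalized iterated integrals (products via a Leibniz-type argument, sums and $F$-multiples directly), and then that an integral of a generalized iterated integral is one (Remark~\ref{integralsofiteratedintegrals}). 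Having shown each generator $\eta_i$ lies in the set of generalized iterated integrals, the same closure-under-sums-and-products argument shows every element of $F[\eta_1,\dots,\eta_n]$ is a generalized iterated integral, giving $T(E|F) \subseteq \{\text{generalized iterated integrals in } E \text{ of } F\}$.

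Finally $(\ref{pvring-iae}) \Rightarrow (\ref{diffeq-iae})$: if $T(E|F)$ consists of generalized iterated integrals, pick a $C$-basis $y_1,\dots,y_N$ of a solution space generating $E$ (the $y_i$ lie in $T(E|F)$, hence are generalized iterated integrals). Actually it is cleaner to first invoke $(\ref{pvring-iae}) \Rightarrow (\ref{group-iae})$ (which follows from the easy inclusion plus Proposition~\ref{Group-Extension}, or directly) and then $(\ref{group-iae}) \Rightarrow (\ref{diffeq-iae})$: write $E = F(\eta_1,\dots,\eta_n)$ as above and build a single generalized iterated integral $\eta$ generating $E$, mimicking the construction $\eta = \eta_1 + x\eta_2 + \cdots$ from the proof of Theorem~\ref{antiderivative-PV-commutative} but iterated through the tower; then $\eta$ satisfies $\mathcal L_{\mathbf f}(\eta) = 0$ for a suitable $\mathbf f$, and since $E = F\langle\eta\rangle$ with $C$ as constant field, $E$ is the Picard-Vessiot extension of $\mathcal L_{\mathbf f}$. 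The main obstacle I anticipate is the bookkeeping in $(\ref{group-iae}) \Rightarrow (\ref{pvring-iae})$, specifically verifying that the \emph{product} of two generalized iterated integrals is again a generalized iterated integral in a way compatible with the inductive tower — this needs the observation that if $\mathcal L_{\mathbf f}(\alpha) = 0$ and $\mathcal L_{\mathbf g}(\beta) = 0$ then $\alpha\beta$ is annihilated by an operator of the form $\mathcal L_{\mathbf h}$, which is not entirely formal and is where I expect the real work to be; Remark~\ref{gii-iae}'s reformulation in terms of the tower $\eta_i = \mathcal L_{\mathbf f_i}(\eta)$ should make this manageable by reducing everything to the statement that the differential subfield generated by finitely many generalized iterated integrals sits inside a Picard-Vessiot extension with unipotent group, to which $(\ref{group-iae}) \Rightarrow (\ref{diffeq-iae})$ applies.
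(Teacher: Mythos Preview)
Your cycle and the implication (\ref{diffeq-iae}) $\Rightarrow$ (\ref{group-iae}) match the paper. The differences lie in the other two implications, and in one of them your anticipated obstacle is real.

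For (\ref{group-iae}) $\Rightarrow$ (\ref{pvring-iae}), you propose to show closure of the set of generalized iterated integrals under sums and products, then conclude that $F[\eta_1,\dots,\eta_n]$ sits inside it. You correctly flag closure under products as the crux, but your suggested fix---passing to a Picard-Vessiot extension with unipotent group and invoking (\ref{group-iae}) $\Rightarrow$ (\ref{diffeq-iae})---is circular: knowing that the ambient extension is a PV extension for some $\mathcal L_{\mathbf h}$ does not tell you that the \emph{particular} element $\alpha\beta$ is annihilated by an operator of the required shape. The paper sidesteps this entirely with a double induction on $i$ and on $\deg_{\eta_i}$. Given $P = g_l\eta_i^l + \cdots + g_0 \in F[\eta_1,\dots,\eta_{i-1}][\eta_i]$, one computes $P'$ and observes that its leading term is $g_l'\eta_i^l$; applying $\partial$ (if $g_l'=0$) or $(1/g_l')\partial$ (if $g_l'\neq 0$) drops the degree in $\eta_i$, so $\mathcal L_{\mathbf f}(P)$ has degree $\leq l-1$ and is a generalized iterated integral by induction, whence so is $P$ by Remark~\ref{integralsofiteratedintegrals}. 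No separate closure-under-products lemma is needed.

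For (\ref{pvring-iae}) $\Rightarrow$ (\ref{diffeq-iae}), the paper does not route through (\ref{group-iae}) or attempt an explicit tower construction analogous to $\eta = \eta_1 + x\eta_2 + \cdots$ (which worked in Theorem~\ref{antiderivative-PV-commutative} only because each $\eta_i'\in F$; in the non-commutative tower this is far from clear). Instead it takes $F$-algebra generators $\eta_1,\dots,\eta_n$ of $T(E|F)$, forms the finite-dimensional $F$-differential module $V$ spanned by all $\eta_i^{(j)}$, and invokes the cyclic vector theorem (using $F\neq C$) to obtain a single $\eta\in V$ with $E=F\langle\eta\rangle$. Since $\eta\in V\subset T(E|F)$, hypothesis (\ref{pvring-iae}) gives directly that $\eta$ is a generalized iterated integral, and the proof is done.
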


\begin{proof} 
	\eqref{diffeq-iae} $\Rightarrow$ \eqref{group-iae}: 
    Let  $\mathbf f\in (F^*)^l$ such that $E$ is a Picard-Vessiot extension of $F$ for an operator $\mathcal L_\mathbf f$. Then there are $C-$linearly independent solutions $\eta_1,\dots,\eta_l \in E$ of $\mathcal L_\mathbf f(Y)=0$ such that 
    $E=F\langle \eta_1, \cdots, \eta_l\rangle.$ Note that for each $i$ the solution $\eta_i$ is a generalized iterated integral of $F.$ Since $E=F\langle \eta_1, \dots, \eta_n\rangle$ is a compositum of differential fields each of which is generated by a generalized iterated integral of $F$, it follows from Remark (\ref{gii-iae}) that there are $\zeta_1,\dots,\zeta_m \in E$ with $\zeta_1' \in F$ and $\zeta_i' \in F[\zeta_1,\dots,\zeta_{i-1}]$ for $2\leq i \leq m$ such that $E=F(\zeta_1,\dots,\zeta_m)$. 
    Now, from Proposition \ref{Group-Extension} \eqref{PV-IAE} we obtain that the differential Galois group $\mathscr G(E|F)$ is a unipotent algebraic group. 
	
	\eqref{group-iae} $\Rightarrow$ \eqref{pvring-iae}:  Let $\mathscr G(E|F)$ be a unipotent algebraic group. Then from Proposition \ref{Group-Extension} \eqref{PV-IAE}, we know that $T(E|F)=F[\eta_1,\dots, \eta_n],$ where $\eta'_1=:\alpha_1\in F$ and $\eta'_i=:\alpha_i\in F[\eta_1,\dots, \eta_{i-1}]$ for each $2\leq i \leq n$.   
    We shall use an induction to prove \eqref{pvring-iae}.
    Note that every element $f\in F$ is an iterated integral of $F$ showing the first step of the induction. Now suppose that there are  integers $1 \leq i\leq n$ and $l\geq 1$ such that  $F[\eta_1,\dots, 
	\eta_{i-1}]=T\left(F(\eta_1,\dots, 
	\eta_{i-1})\ |\ F\right)$ is the set of all generalized iterated integrals of $F$ in $F(\eta_1,\dots, 
	\eta_{i-1})$ and that every polynomial in $\eta_i$ over $ F[\eta_1,\dots, \eta_{i-1}]$ of degree $l-1$ is a generalized iterated integral of $F.$ Now, let  
	$$P=g_l\eta^l_i+g_{l-1}\eta^{l-1}_i+\dots+g_0\in F[\eta_1,\dots, \eta_{i-1}][\eta_i]$$ 
    be any polynomial of degree $l.$ 
	Then, since 
	$$P'=g'_l \eta^l_i+ (g'_{l-1}+lg_l \alpha_i)\eta^{l-1}_i+\dots+g_1\alpha_i+g'_0\in F[\eta_1,\dots, \eta_{i-1}][\eta_i],$$  for 
    $$\mathbf f:=\begin{cases} (1) \in (F^*)^1 & \mbox{if}\ \ g'_l=0\\  (1, 1/g'_l)\in (F^*)^{2}& \mbox{if}\ \ g'_l\neq 0\end{cases},$$ 
	we have $\mathcal L_\mathbf f(P)\in F[\eta_1,\dots, \eta_i]$ is of degree $\leq l-1.$ From the induction assumption on the degree it follows that $\mathcal L_\mathbf f(P)$ is an integrated integral of $F$ and this in turn  implies that $P$ is a generalized iterated integral of $F$ by Remark \ref{integralsofiteratedintegrals}. This proves that the Picard-Vessiot ring $T(E|F)$ is the set of all generalized iterated integrals in $E$ of $F.$
	
	\eqref{pvring-iae} $\Rightarrow$ \eqref{diffeq-iae}: Suppose that $T(E|F)$ is generated as an $F-$algebra by $\eta_1,\dots, \eta_n,$ where each $\eta_i$ is a generalized iterated integral of $F.$ 
    Since each $\eta_i$ is a solution of some monic linear homogeneous differential equation over $F,$ the $F-$vector space $V$ spanned by 
    $$\{\eta^{(j)}_i\ | \ i=1,2\dots, n\ \text{and} \ j=0, 1,2,\dots\}$$ 
    is a finite dimensional differential $F-$module.
    Now, the assumption that $F\neq C$ guarantees the existence of a cyclic vector (cf.~\cite{Kovacic-2001}) for $V,$ that is, $V$ is spanned as an $F-$vector space by $\eta, \eta', \dots, \eta^{(m)}$ for some $\eta \in V$ and some nonnegative integer $m.$ 
    Then, since $\eta_1,\dots, \eta_n\in V\subset T(E|F),$ we obtain that $E=F\langle V\rangle= F\langle \eta\rangle$ and that $\eta \in V\subset T(E|F)$ is also a generalized iterated integral of $F.$ This proves \eqref{diffeq-iae}.\end{proof}

\section{scalar and matrix differential equations having unipotent Galois groups}

	It is known (see \cite[Section 22]{Kol-1948}) that a monic differential operator $\mathcal L\in F[\partial]$ of order $n$ can be factored as $$\mathcal L= (\partial-a_1) (\partial-a_2)\cdots(\partial-a_n),$$ 
    where each $a_i$ belongs to a finite algebraic extension of $F$, if and only if the connected component of the differential Galois group of $\mathcal{L}$  is a solvable linear algebraic group. 
    We shall now show in the next proposition that if the differential Galois group is unipotent, then there is another way of factoring $\mathcal{L}$. 
    
\begin{theorem}\label{prop:constructiomoffactorization}
    Let $\mathcal L \in F[\partial]$ be a monic operator of order $n,$  $E$ be a Picard-Vessiot extension of $F$ for $\mathcal L$ and $V\subset E$ be the set of all solutions of $\mathcal L(y)=0$.
    \begin{enumerate}[(i)]
    \item \label{item1:constructiomoffactorization} 
    The differential Galois group $\mathscr G(E|F)$ is unipotent, if and only if
there is a tuple $\mathbf f:=(f_1,f_2,\dots,f_n)\in (F^*)^n$ and an element $f_{n+1}\in F^*$ such that $\mathcal L=\mathcal L_\mathbf f f_{n+1},$ in which case, there is a $C-$basis $v_1,v_2,\dots,v_n$ of $V$  such that   
$$v_1=\frac{1}{f_{n+1}}, v_2=\frac{1}{f_{n+1}}\int\frac{1}{f_n}, v_3=\frac{1}{f_{n+1}}\left(\int\frac{1}{f_n}\left(\int\frac{1}{f_{n-1}}\right)\right),\dots,$$ $$v_n=\frac{1}{f_{n+1}}\left(\int\frac{1}{f_n}\left(\int\frac{1}{f_{n-1}}\cdots\frac{1}{f_3}\left(\int\frac{1}{f_2}\right)\cdots\right)\right).$$

   \item \label{item2:constructiomoffactorization}  
    The differential Galois group $\mathscr G(E|F)$ is unipotent, if and only if there are elements $f_2,\dots,f_n \in F^*$
    such that $E$ is a Picard-Vessiot extension of $F$ for the matrix differential equation $Y'=AY$ where 
    \begin{equation} \label{root-matrix}
    A:=\sum^{n-1}_{i=1}\frac{1}{f_{n-i+1}}\rm{E_{i,i+1}}=
    \begin{pmatrix}0&1/f_n &0&0&\cdots&0\\
	0&0& 1/f_{n-1}&0&\cdots&0\\ \vdots&\vdots&\vdots&\ddots&\vdots&0\\  0&0& 0&\cdots&1/f_3&0\\ 0&0& 0&\cdots&0&1/f_2\\  0&0& 0&\cdots&0&0\end{pmatrix}.
    \end{equation}
        \end{enumerate}
\end{theorem}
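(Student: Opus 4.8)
The plan is to take part \eqref{item1:constructiomoffactorization} as the core and obtain part \eqref{item2:constructiomoffactorization} by translating between the scalar operator $\mathcal L$ and the matrix equation $Y'=AY$. In \eqref{item1:constructiomoffactorization} the easy direction is ``$\mathcal L=\mathcal L_{\mathbf f}f_{n+1}\ \Rightarrow\ \mathscr G(E|F)$ unipotent'': as operators $\mathcal L(y)=\mathcal L_{\mathbf f}(f_{n+1}y)$, so the solution space satisfies $V=\tfrac1{f_{n+1}}\ker\mathcal L_{\mathbf f}$ and hence $E=F\langle V\rangle=F\langle\ker\mathcal L_{\mathbf f}\rangle$; since $\mathcal L_{\mathbf f}$ also has order $n$ and $E$ has constant field $C$, the field $E$ is a Picard--Vessiot extension of $F$ for $\mathcal L_{\mathbf f}$ as well, and Theorem~\ref{Uni-PVring} applies. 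For the basis I would peel off one factor at a time: writing $\mathcal L_{\mathbf f}=\mathcal L_{\mathbf f''}(f_n\partial)$ with $\mathbf f''=(f_1,\dots,f_{n-1})$, the operator $f_n\partial$ maps $\ker\mathcal L_{\mathbf f}$ onto $\ker\mathcal L_{\mathbf f''}$ with kernel exactly the constants; comparing dimensions and iterating, one produces $z_1=1,\,z_2=\int 1/f_n,\,\dots,\,z_n=\int 1/f_n(\cdots(\int 1/f_2)\cdots)$ in $\ker\mathcal L_{\mathbf f}\subset E$, which are $C$-linearly independent and hence a basis, and then $v_i:=z_i/f_{n+1}$ is the asserted basis of $V$.

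For the converse direction of \eqref{item1:constructiomoffactorization}, assume $\mathscr G(E|F)$ is unipotent. Since every element of $\mathscr G(E|F)$ acts unipotently on $V$, there is a $C$-basis $v_1,\dots,v_n$ of $V$ in which each $\sigma\in\mathscr G(E|F)$ is upper triangular with ones on the diagonal. Then $v_1$ is fixed by $\mathscr G(E|F)$, so $v_1\in E^{\mathscr G(E|F)}=F$; put $f_{n+1}:=1/v_1\in F^*$ and $u_j:=f_{n+1}v_j$, so $u_1=1$ and $\sigma(u_j)=u_j+\sum_{i<j}c_{ij}(\sigma)u_i$ with $c_{ij}(\sigma)\in C$. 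Now $u_2'$ is $\mathscr G(E|F)$-invariant, hence lies in $F$, and it is nonzero by independence of $u_1,u_2$; set $f_n:=1/u_2'\in F^*$. The elements $f_nu_j'$ for $2\le j\le n$ again form a triangular unipotent system (with $f_nu_2'=1$) of size one less, so induction on the size yields $f_{n-1},\dots,f_2\in F^*$ together with the nested-integral formulas for the $u_j$, hence for the $v_j=u_j/f_{n+1}$ (after, if necessary, modifying each $v_j$ by a $C$-combination of $v_1,\dots,v_{j-1}$ to normalize the integration constants). Finally, set $f_1:=1/(f_2\cdots f_n f_{n+1})\in F^*$ and $\mathbf f:=(f_1,\dots,f_n)$; then $\mathcal L_{\mathbf f}f_{n+1}$ is monic of order $n$, and a direct check shows $\mathcal L_{\mathbf f}(u_j)=0$, so its solution space is $\tfrac1{f_{n+1}}\mathrm{span}_C\{u_1,\dots,u_n\}=\mathrm{span}_C\{v_1,\dots,v_n\}=V$. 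Since a monic operator of order $n$ is determined by its $n$-dimensional solution space in $E$, we conclude $\mathcal L=\mathcal L_{\mathbf f}f_{n+1}$.

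For part \eqref{item2:constructiomoffactorization}, observe that the matrix $A$ in \eqref{root-matrix} is strictly upper triangular, so $Y'=AY$ has a fundamental solution matrix $\mathcal Y$ which is unipotent upper triangular, its entries being iterated integrals of $1/f_2,\dots,1/f_n$; one checks column by column that the first row of $\mathcal Y$ is $(1,\int 1/f_n,\dots)$, which is $(f_{n+1}v_1,\dots,f_{n+1}v_n)$ once $f_2,\dots,f_n$ are taken from part \eqref{item1:constructiomoffactorization}. If $\mathscr G(E|F)$ is unipotent, part \eqref{item1:constructiomoffactorization} provides $\mathbf f$ and $f_{n+1}$, and then every entry of $\mathcal Y$ is obtained from $v_1,\dots,v_n$ by multiplications by $f_{n+1}$ and the $f_i$ and by differentiations, hence lies in $E$, while the $v_i$ are recovered from the first row; thus $F\langle\mathcal Y\rangle=E$ and $E$ is a Picard--Vessiot extension for $Y'=AY$. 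Conversely, if $E$ is a Picard--Vessiot extension for $Y'=AY$ with $A$ as in \eqref{root-matrix}, then $E$ already contains such a unipotent upper triangular $\mathcal Y$ (the required iterated antiderivatives lie in $E$ because $A$ is strictly upper triangular, by induction on $n$ stripping off the last coordinate), and since $\sigma(\mathcal Y)$ is again unipotent upper triangular for every $\sigma$, the matrix $M_\sigma:=\mathcal Y^{-1}\sigma(\mathcal Y)$ is a constant unipotent upper triangular matrix; hence $\mathscr G(E|F)$ embeds into that group and is unipotent.

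The step I expect to be the main obstacle is the converse direction of \eqref{item1:constructiomoffactorization}: the triangularization yields the $v_i$ with the correct transformation law fairly easily, but converting this into the precise factored form $\mathcal L=\mathcal L_{\mathbf f}f_{n+1}$ requires careful bookkeeping --- normalizing the integration constants so that the nested-integral formulas hold exactly, checking that the recursively produced $f_{n-1},\dots,f_2$ are the same ones that reappear when $\mathcal L_{\mathbf f}f_{n+1}$ is expanded, and verifying throughout that all the intermediate antiderivatives genuinely lie in $E$ rather than in some larger extension.
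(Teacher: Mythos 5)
Your proposal is correct and follows essentially the same route as the paper: triangularize the unipotent action on $V$, set $f_{n+1}=1/v_1$ and define $f_n,\dots,f_2$ recursively from the $\mathscr G(E|F)$-invariance of the successive derivatives of the $f_{n+1}v_j$, choose $f_1$ to make $\mathcal L_{\mathbf f}f_{n+1}$ monic and identify it with $\mathcal L$ by comparing $n$-dimensional solution spaces, while the reverse implication is reduced to Theorem~\ref{Uni-PVring} and part \eqref{item2:constructiomoffactorization} is obtained from the unipotent upper triangular fundamental matrix built by the recursion $t_{k,l}=f_{n-(k-2)}t'_{k-1,l}$. The only local divergence is in the converse of \eqref{item2:constructiomoffactorization}, where the paper normalizes the fundamental matrix into $U(n,E)$ by citing van der Put--Singer (Prop.~1.31(1) and 1.20(2)), whereas you sketch a direct inductive normalization using the strict upper triangularity of $A$; both arguments work.
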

\begin{proof}
    \eqref{item1:constructiomoffactorization}: Let $V\subset E$ be the set of all solutions of $\mathcal L(y)=0$ and suppose that $\mathscr G(E|F)$ is unipotent. 
     Then, one can choose a $C-$basis $\{v_1,v_2,\dots, v_n\}$ of $V$ so that the faithful representation $\mathscr G(E|F)\hookrightarrow \mathrm{GL}(V)$  is equivalent to a representation $\mathscr G(E|F)\hookrightarrow \mathrm{GL}_n(C)$ such that the image is contained in the group of all unipotent upper triangular matrices $U(n,C)$ of $\mathrm{GL}_n(C)$, i.e.~all upper triangular matrices with $1s$ on the diagonal. Thus, for any $\sigma\in \mathscr G(E|F),$ we obtain 
    \begin{align}
    \sigma(v_1)&=v_1, \label{solninF}\\
	\sigma(v_2)&=v_2+v_1 c_{1,2}, \label{antiderivative}\\
	\sigma(v_3)&=v_3+v_2 c_{2,3}+v_1 c_{1,3}, \label{iter-antiderivaive-2}\\
	& \quad  \vdots \notag \\
    \sigma(v_n)&=v_n+v_{n-1}c_{n-1,n}+\cdots+v_2c_{2,n}+v_1c_{1,n}  \label{iter-antiderivative-n}
\end{align}
with $c_{i,j} \in C$.
Equation \eqref{solninF} implies that $v_1\in F$ and therefore $f_{n+1}:=1/v_1 \in F^*.$ From Equation \eqref{antiderivative} we obtain that $\sigma(f_{n+1}v_2)=(f_{n+1}v_2) + c_{1,2}$ and therefore 
$\sigma\left((f_{n+1}v_2)'\right)=\sigma\left(f_{n+1}v_2\right)'=\left(f_{n+1}v_2\right)',$ which implies that $\left(f_{n+1}v_2\right)'\in F.$ 
Since $v_1$ and $v_2$ are linearly independent over $C$, we conclude that $(f_{n+1}v_2)'\neq 0.$ Let $f_n:=1/(f_{n+1}v_2)'$ and then we obtain that $$(f_n(f_{n+1}v_2)')'=0.$$ 
Next, from Equation \eqref{iter-antiderivaive-2}, we obtain  $\sigma(f_{n+1}v_3)=(f_{n+1}v_3)+(f_{n+1}v_2)c_{2,3}+c_{1,3}$. It follows that $(f_{n}(f_{n+1}v_3)')'\in F$ and since $v_1$, $v_2$, $v_3$ are $C$-linearly independent, we obtain that  
$(f_{n}(f_{n+1}v_3)')'\neq0$. Let $f_{n-1}:=1/(f_{n}(f_{n+1}v_3)')'$ and then we have that
$$(f_{n-1}(f_{n}(f_{n+1}v_3)')')'=0.$$ 
Likewise for $i=0,1,\dots, n-2,$ we shall recursively define
\begin{equation} \label{recurrsivegi}
f_{n-i}:=\frac{1}{(f_{n-(i-1)}(f_{n-(i-2)}(\cdots (f_{n+1}v_{i+2})'\cdots)')')'} \in F^* \end{equation} and obtain that 
\begin{equation}\label{diff-eqn-gii}
(f_{n-i}(f_{n-(i-1)}(f_{n-(i-2)}(\cdots (f_{n+1}v_{i+2})'\cdots)')')')'=0.
\end{equation}
In particular, for $i=n-2,$ we have  
\begin{equation} 
(f_2(f_3\cdots(f_n(f_{n+1}v_{n})')'\cdots)')'=0 .
\end{equation}
We choose an element $f_1\in F^*$ so that for \begin{equation}\label{choiceofg}\mathbf f:=(f_1,f_2,\dots, f_n),\end{equation}
the operator $\mathcal L_\mathbf f f_{n+1}$ is monic.  Since $f_{n+1}v_1=1$, we have $\mathcal L_\mathbf f f_{n+1}(v_1)= \mathcal L_\mathbf f(f_{n+1}v_1)=0$. From Equation (\ref{diff-eqn-gii}) we conclude that 
$$\mathcal L_{\mathbf f}f_{n+1}(v_{i+2})=0$$
for all $i=0,1,\dots, n-2.$  
Thus, $V$ is also the set of all solutions of the monic linear differential equation 
\begin{equation}\label{scalardiffeqL_g}
\mathcal L_{\mathbf f}f_{n+1} (y)=0.
\end{equation}
Since $\mathcal L_\mathbf f f_{n+1}$ is monic and its order is $n$, it follows that $\mathcal L= \mathcal L_\mathbf f f_{n+1}$.

The converse of \eqref{item1:constructiomoffactorization} follows from Theorem~\ref{Uni-PVring}.\\
We have that $v_1=\frac{1}{f_{n+1}}$ and from Equation (\ref{diff-eqn-gii}) we can write 
\begin{gather*}
v_2=\frac{1}{f_{n+1}}\int\frac{1}{f_n},\quad v_3=\frac{1}{f_{n+1}}\left(\int\frac{1}{f_n}\left(\int\frac{1}{f_{n-1}}\right)\right),\dots,   \\
 v_n= \frac{1}{f_{n+1}}\left(\int\frac{1}{f_n}\left(\int\frac{1}{f_{n-1}}\cdots\frac{1}{f_3}\left(\int\frac{1}{f_2}\right)\cdots\right)\right).
\end{gather*}
\eqref{item2:constructiomoffactorization}: Assume that $\mathscr G(E|F)$ is unipotent and keep the notation of \eqref{item1:constructiomoffactorization}. Then according to \eqref{item1:constructiomoffactorization}  we have $\mathcal L= \mathcal L_\mathbf f f_{n+1}$ and so the elements $w_i=v_i/v_1$ form a $C$-basis of the solution space of $\mathcal{L}_\mathbf f(y)=0$ and $E$ is generated as a differential field over $F$ by $w_1,\dots,w_n$, that is $E=F\langle w_1,\dots,w_n \rangle$. For $1\leq k\leq n$ we define 
\[
t_{1,k}:=w_k 
\]
and for $2\leq k \leq n$ and $k \leq l\leq n$ we define recursively
\[
t_{k,l} := f_{n-(k-2)} t'_{k-1,l} .
\] 
Let $t_{i,j}:=0$ for $i> j$ and observe that $T:=(t_{i,j}) \in U(n,E)$. 
A straight forward calculation shows that  
\[
T'=AT ,
\]
where $A$ is as in \eqref{root-matrix}.
 
Since $w_k=t_{1,k} \in F(t_{i,j}| 1 \leq i,j \leq n)$ and $F(t_{i,j}| 1 \leq i,j \leq n)$ is a differential field, we actually have that $E=F\langle w_1,\dots,w_n \rangle=F(t_{i,j}| 1 \leq i,j \leq n)$.

To prove the converse assume that $E$ is a Picard-Vessiot extension of $F$ for $Y'=AY$ and let $T \in \mathrm{GL}_n(E)$ be a fundamental matrix for $Y'=AY$. Since $A$ lies in the Lie algebra of $U(n,F)$, it follows from \cite[Prop.~1.31 (1)]{MvdP03} that $\mathscr G(E|F)$ is contained in a conjugate of $U(n,C)$ in $\mathrm{GL}_n(C)$ and is therefore also unipotent. More precisely, it follows from the proof of this proposition that we can construct a Picard-Vessiot extension of $F$ for $Y'=AY$ such that there is an upper triangular fundamental matrix and the respective differential Galois group is contained in $U(n,C)$. We obtain now from \cite[Prop.~1.20 (2)]{MvdP03} that there exists $M \in \mathrm{GL}_n(C)$ such that $TM^{-1} \in U(n,E)$ and $M \mathscr G(E|F) M^{-1} \subset U(n,C)$. Thus, we can assume that $T=(t_{i,j}) \in U(n,E)$ and $\mathscr G(E|F) \subset U(n,C)$.  For $2\leq k\leq l\leq n,$ we observe that 
$t_{k,l}= f_{n-(k-2)}t'_{(k-1),l} $ and therefore, we obtain using an induction that the entries of the fundamental matrix belong to the differential field $F\langle t_{1,2}, t_{1,3},\dots, t_{1,n}\rangle.$ In particular, $E=F\langle t_{1,2}, t_{1,3},\dots, t_{1,n}\rangle.$ Since 
$$\frac{1}{ f_{n-(k-2)}}=t'_{(k-1),k},$$ 
we also obtain for $1\leq j\leq n$ that
$$(f_2(f_3\cdots(f_{n-1}(f_nt'_{1,j})'\cdots)')'=0.$$ 
We prove that $t_{1,1},\dots,t_{1,n}$ are $C$-linearly independent. Assume that there are constants $c_i$ such that $\sum_{i=1}^n c_i t_{1,i}=0.$ Taking the first derivative of this sum, we obtain
$\sum_{i=2}^n c_i t_{2,i}=0$, since $t_{1,1}=1$ and $t_{1,i}'= t_{2,i}/f_n$ for every $2\leq i \leq n$. Now taking again the derivative and using that $t_{2,2}=1$ and $t_{2,i}'= t_{3,i}/f_{n-1}$, we get $\sum_{i=3}^n c_i t_{3,i}=0$. Continuing this way, we obtain for $1\leq j \leq n$  the equation $\sum_{i=j}^n c_i t_{j,i}=0$. For $j=n$ we then have $c_n t_{n,n}=0$ and so $c_n=0$. For $j=n-1$ we then obtain from $c_{n-1} t_{n-1,n-1}+c_{n} t_{n-1,n}=0$ that $c_{n-1}=0$. Continuing in this way, we obtain that $c_n=c_{n-1}=\dots=c_1=0$ showing that $t_{1,1},\dots,t_{1,n}$ are $C$-linearly independent.
Choosing a suitable $f_1,$ we conclude that $E$ is also a Picard-Vessiot extension for the monic linear differential operator $\mathcal L_\mathbf f\in F[\partial],$ where $\mathbf f=(f_1,f_2,\dots, f_n)\in (F^*)^{n}.$
\end{proof}

\begin{corollary}\label{corollary}
    Let $\mathbf{f}=(f_1,\dots,f_n) \in (F^*)^n$ be an element such that $\mathcal{L}_{\mathbf{f}}$ is a monic operator.
    Then a differential field extension $E$ of $F$ is a Picard-Vessiot extension of $\mathcal{L}_{\mathbf{f}}$, if and only if it is a Picard-Vessiot extension of $F$ for $Y'=AY$ with 
  \[A=\sum^{n-1}_{i=1}\frac{1}{f_{n-i+1}}\rm{E_{i,i+1}}.\]  
\end{corollary}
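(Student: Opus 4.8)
The plan is to avoid the differential Galois group entirely and instead build an explicit $C$-linear correspondence between the solution space of $\mathcal L_{\mathbf f}(y)=0$ and that of $Y'=AY$ inside an arbitrary differential extension of $F$ having $C$ as its field of constants; the equivalence of the two notions of Picard-Vessiot extension will then fall out by unraveling definitions. In effect this isolates, for a prescribed $\mathbf f$, exactly the scalar-to-matrix passage already carried out in the proof of Theorem~\ref{prop:constructiomoffactorization}~\eqref{item2:constructiomoffactorization}.

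Given a solution $y$ of $\mathcal L_{\mathbf f}(y)=0$, I would set $t_1:=y$ and define recursively $t_k:=f_{n-k+2}\,t_{k-1}'$ for $2\le k\le n$. Unwinding $\mathcal L_{\mathbf f}=f_1\partial f_2\partial\cdots f_n\partial$ one factor at a time --- the same computation that produces Equation~\eqref{diff-eqn-gii} --- shows that $t_i'=t_{i+1}/f_{n-i+1}$ for $1\le i\le n-1$ and that $\mathcal L_{\mathbf f}(y)=f_1\,t_n'$; since $f_1\in F^*$ this forces $t_n'=0$, so $(t_1,\dots,t_n)^{\mathrm T}$ is a solution of $Y'=AY$ with $A$ as in \eqref{root-matrix}. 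Conversely, if $(y_1,\dots,y_n)^{\mathrm T}$ solves $Y'=AY$, then reading off its rows gives $y_k=f_{n-k+2}\,y_{k-1}'$ for $2\le k\le n$ together with $y_n'=0$, whence $y:=y_1$ satisfies $\mathcal L_{\mathbf f}(y)=f_1 y_n'=0$; the two assignments are visibly mutually inverse, so we obtain a $C$-linear isomorphism between the two solution spaces. Moreover, since $t_1=y$ while each $t_k$ is a differential polynomial in $y$ with coefficients in $F$, we get $F\langle y\rangle=F\langle t_1,\dots,t_n\rangle$, and applying this to a $C$-basis of solutions shows that the differential subfield of $E$ generated over $F$ by a full set of solutions of $\mathcal L_{\mathbf f}$ coincides with the one generated by the entries of the corresponding fundamental matrix of $Y'=AY$.

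With the correspondence in hand the corollary becomes a matter of comparing the defining properties: $E$ is a Picard-Vessiot extension of $F$ for $\mathcal L_{\mathbf f}$ exactly when $E$ has $C$ as its constants, the solution space of $\mathcal L_{\mathbf f}$ in $E$ is $n$-dimensional, and $E$ is generated over $F$ as a differential field by those solutions --- and by the previous paragraph each of these conditions holds for $\mathcal L_{\mathbf f}$ if and only if the analogous condition holds for $Y'=AY$. I expect the only point requiring care to be purely bookkeeping: tracking the index shift $k\mapsto n-k+2$ so that it lines up with the placement of the entries $1/f_{n-i+1}$ in $A$, and phrasing the final equivalence in terms of the constant field, the dimension of the solution space, and differential generation rather than merely in terms of the Galois group. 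There should be no genuine obstacle, since Theorem~\ref{prop:constructiomoffactorization} has already done the substantive work of tying unipotence of $\mathscr G(E|F)$ to these two equations; the corollary simply records that the scalar--matrix dictionary is valid for any prescribed $\mathbf f\in(F^*)^n$ with $\mathcal L_{\mathbf f}$ monic, not only for the tuple manufactured inside that theorem.
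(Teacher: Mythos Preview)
Your proposal is correct and is essentially the same approach as the paper's: the paper's proof simply says ``readily follows from the proof of Theorem~\ref{prop:constructiomoffactorization}'', and what you have written is precisely the scalar-to-matrix correspondence $y\mapsto(t_1,\dots,t_n)^{\mathrm T}$ (and its inverse) carried out in that proof, now applied directly to a prescribed $\mathbf f$. Your observation that the Galois group plays no role once $\mathbf f$ is given is accurate and matches why the corollary detaches cleanly from the unipotence hypothesis in Theorem~\ref{prop:constructiomoffactorization}.
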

\begin{proof}
Readily follows from the proof of Theorem \ref{prop:constructiomoffactorization}
\end{proof}

		The rest of this section is dedicated to construct for $F=C(x)$ and for a given unipotent group $U$ an element ${\bf f} \in (F^*)^n$ such that the differential Galois group for $\mathcal{L}_{\bf f}(y)=0$ or for $Y'=AY$ with $A$ is in \eqref{root-matrix} is $U(C)$.

        Let $\mathfrak u(n,C)$ be the lie algebra of $U(n,C)$, that is the Lie algebra of all upper triangular matrices with zeros on the diagonal. 
		Let $U\subseteq U(n,C)$ be a unipotent matrix group of dimension $m$ and let $\mathfrak u\subseteq \mathfrak u(n,C)$ be its Lie algebra. Moreover, let $C[Z_{i,j}\ | \ 1\leq i<j\leq n]$ be the coordinate ring of $U(n,C)$ and let 
        $$I\lhd C[Z_{i,j}\ | \ 1\leq i<j\leq n]$$ 
        be the defining ideal of $U.$  The commutator subgroup $[U, U]$ of $U$ is normal and we can consider the quotient homomorphism $\pi:U \rightarrow U/[U,U]$.  Let $d\pi: \mathfrak u\to \mathrm{Lie}(U/[U,U])$ be the induced homomorphism of Lie algebras. If $l$ is the dimension of the commutative group $U/[U,U]$, we can choose a basis $X_1,\dots,X_{l},\dots,X_m$ of $\mathfrak{u}$ such that 
	\[d\pi(X_1),\dots,d\pi(X_{l})\] form a basis of $\mathrm{Lie}(U/[U,U]).$ 
	
	Let $\tilde{a}_1,\dots,\tilde{a}_{l}$ be rational functions in $F$ such that their images in $F/F'$ are $C$-linearly independent and define 
	\[
A_{\mathfrak{u}}:= \sum_{i=1}^{l} \tilde{a}_i X_i \in \mathfrak{u}(F)\subseteq \mathfrak u(n,F).\]
Then, it follows from \cite[Proposition 14]{KovacicInvProb} and \cite[Proposition 11]{KovacicInvProb} that the differential Galois group of any Picard-Vessiot extension $E$ of $Y'=A_u Y$ is isomorphic to $U(C).$ 
 We are going to construct a Picard-Vessiot extension $E$ of $F$ for $Y'=A_u Y$.
We extend the derivation of $F$ to the polynomial ring 
$$R:=F[Z_{i,j} \mid 1\leq i<j \leq n]$$ 
in the indeterminates $Z_{i,j}$ over $F$ by $Z'=A_{\mathfrak{u}}Z$, where $Z$ is the matrix 
\begin{equation}\label{genericpoint}Z=\begin{pmatrix}1&Z_{1,2}&Z_{1,3}&\cdots&Z_{1,n}\\ 0&1&Z_{2,3}&\cdots&Z_{2,n}\\ \vdots&\vdots&\vdots&\vdots&\vdots\\ 0&0&\cdots&1&Z_{n-1,n}\\ 0&0&\cdots&0&1\end{pmatrix}\in U(n, R).
\end{equation}  
We consider now the ideal $I^e$ in $R$, which is generated by the defining ideal $I$ of $U$. Since $A_{\mathfrak{u}}$ is an element of $\mathfrak{u}(F)$, it follows from the proof of \cite[Proposition 1.31 (1)]{MvdP03} that the ideal $I^e$ is a differential ideal. Choosing a maximal differential ideal $\mathfrak m$ in $R$ containing $I^e$ we obtain a Picard-Vessiot extension
 $$E:=\mathrm{Frac}\left(R / \mathfrak m\right)$$ of $F$ for the equation $Y'=A_\mathfrak u Y.$ 
 
 Since the differential Galois group of any Picard-Vessiot extension for $Y'=A_\mathfrak u Y$ is isomorphic to $U(C)$, we conclude that $\mathscr G(E|F) \cong U(C)$.
It follows from \cite[Corollary 5.29]{mag-book} that there is a $\mathscr G(E|F)$ equivariant $F-$algebra isomorphism 
$$T(E|F)\cong  F\otimes_C C[U] \cong R/I^e,$$
where $C[U]:=C[Z_{i,j} \mid 1\leq i<j \leq n]/I$ denotes the coordinate ring of $U$. It is also known that $T(E|F)=R/\mathfrak m$ (see \cite[Corollary 1.38]{MvdP03}) as differential $F-$algebras and so $R/\mathfrak{m} \cong R/I^e$. Since $I^e $ is contained in $\mathfrak{m}$, we conclude that $I^e= \mathfrak{m}$ and so $E=\mathrm{Frac}\left(R/I^e\right)$ is a Picard-Vessiot extension of $F$ for the equation $Y'=A_\mathfrak u Y$.  

Let $\phi: R \to R/I^e$ be the quotient morphism. We denote the image of $Z_{i,j}$ under $\phi$ by $\overline{Z}_{i,j}$ and for a matrix $M \in \mathrm{GL}_n(R)$ we mean by $\phi(M)$ the matrix obtained by applying $\phi$ to the entries of $M$.
Then the matrix 
$$ 
\overline{Z}:= \phi(Z) \in U(E)
$$ 
is a fundamental matrix for the equation $Y'=A_\mathfrak u Y.$ Furthermore, for any  $\sigma\in \mathscr G(E|F)$  we have $\sigma(\overline{Z}) \in U(E)$ and $\sigma(\overline{Z})=\overline{Z} C_\sigma,$ where $C_\sigma\in  \mathrm{GL}_n(C).$ Thus, $C_{\sigma} \in U(C)$ and the representation of the differential Galois group for $\overline{Z}$ is $U(C)$. 

In order to construct a scalar differential equation of the form $\mathcal L_\mathbf f (y)=0$ having $E$ as its Picard-Vessiot extension, we first find a cyclic vector $v\in E^n$ for the differential $E-$module $E^n$ defined by the equation $Y'=A_\mathfrak u Y$ as in \cite{churchill2002cyclic}. This process provides a matrix $B=(b_{i,j})\in \mathrm{GL}(n,F)$ such that 
$$BA_\mathfrak u B^{-1}+B'B^{-1}=:A_c\in M(n,F),$$
where $A_c$ is a companion matrix
$$A_c=\begin{pmatrix}0&1&0&0&\cdots&0\\0&0&1&0&\cdots&0\\ \vdots&\vdots&\vdots&\ddots&\vdots&\vdots\\ 0&0&0&\cdots&1&0\\0&0&0&\cdots&0&1\\ -a_0&-a_1&-a_3&\cdots&\-a_{n-2}&-a_{n-1}\end{pmatrix}$$ 
with $a_0,a_1,\dots, a_{n-1}\in F.$
Next we observe that $(BZ)'=A_c BZ$ and so
$$BZ=\mathrm{Wr}(Y_1,\dots, Y_n):=\begin{pmatrix}Y_1&Y_2&\cdots&Y_n\\Y'_1&Y'_2&\cdots&Y'_n\\ \vdots&\vdots&\cdots&\vdots\\Y^{(n-1)}_1&Y^{(n-1)}_2&\cdots&Y^{(n-1)}_n\end{pmatrix}$$
is a Wronskian matrix in $Y_1:=b_{1,1}$ and $Y_j:=\sum^j_{i=1}b_{1i}Z_{i,j}$ for $2\leq j\leq n$.  
Now let
\begin{equation} \label{(1,1)entryis1} B_0= \begin{pmatrix}  Y_1^{-1} & 0 & 0  & \dots  & 0 \\   (Y_1^{-1})' &  Y_1^{-1} & 0 & & \vdots \\ (Y_1^{-1})'' &  2(Y_1^{-1})' & Y_1^{-1} \\ \vdots & \vdots & &\ddots  &  \\ & & & & \\ (Y_1^{-1})^{(n-1)} & \dots & & (n-1) (Y_1^{-1})' & Y_1^{-1} \end{pmatrix} \end{equation} and observe that $B_0BZ$ remains a Wronskian matrix. In fact, we now have 
 $$B_0BZ= \mathrm{Wr}(1,Y_2 Y_1^{-1}\dots,Y_n Y_1^{-1}).$$    
 
Let $\overline{Y}_{i,j}$ be the image of the $(i,j)$-th entry of the Wronskian matrix $B_0BZ$ under $\phi$, that is 
$$\overline{Y}_{i,j}:=\left( \phi(B_0BZ)\right)_{i,j}=\left(B_0B (\overline{Z}_{i,j})\right)_{i,j}.$$ 
Since $B_0  B\in \mathrm{GL}(n, F)$ and $E$ is generated as a field by $\{\overline{Z}_{i,j}\ | \ 1\leq i,j\leq n\}$ over $F$, it follows that $E$ is also generated as a field  over $F$ by $\{ \overline{Y}_{i,j}\ | \ 1\leq i,j\leq n\}$.  Furthermore, $E$ is a Picard-Vessiot extension of a monic operator $\mathcal L\in F[\partial]$, whose full set of solutions is the $C$-vector space $\mathrm{span}\{1,\overline{Y}_{1,2},\dots, \overline{Y}_{1,n}\}.$ 
 
 For any $\sigma\in \mathscr G(E|F),$ there is a matrix $C_\sigma=(c_{i,j}^{\sigma})\in U(C)$ such that  $\sigma(\overline{Z})=\overline{Z}C_\sigma$ and therefore $\sigma(B_0B\overline{Z})=B_0B\overline{Z}C_\sigma.$ Then, 
for each $2\leq i\leq n$ we have  
$$\sigma(\overline{Y}_{1,i})=\overline{Y}_{1,i}+\overline{Y}_{1,i-1}c_{i-1,i}^{\sigma}+\cdots+\overline{Y}_{1,2}c_{2,i}^{\sigma}+c_{1,i}^{\sigma}.$$ 
Now we construct recursively $\mathbf{f}\in (F^*)^n$ as in Equation (\ref{recurrsivegi}) with $v_i$ replaced by $\overline{Y}_{1,i}$ and consider $\mathcal L_\mathbf f\in F[\partial]$. Then the full set of solutions of
$\mathcal L_\mathbf{f}(y)=0$  is 
$$\mathrm{span}_C\{1, \overline{Y}_{1,2},\dots, \overline{Y}_{1,n}\}.$$ 
Thus, $E=F\langle \overline{Y}_{1,2},\dots, \overline{Y}_{1,n}\rangle$ is a Picard-Vessiot extension of $F$ for $\mathcal L_{\mathbf f}(y)=0$ with differential Galois group $U(C)$. 

The element $\mathbf{f}\in (F^*)^n$ can be actually computed using the generators of the defining ideal $I$ of $U$. More precisely, applying the recursion defined in Equation (\ref{recurrsivegi}) with $v_i$ replaced by $Y_iY^{-1}_1$  
we obtain 
\begin{equation}\label{eqn:definingG_i}
\begin{array}{l}\displaystyle
G_n:=\frac{1}{(Y_2Y^{-1}_1)'},\  G_{n-1}:=\frac{1}{(G_n(Y_3Y^{-1}_1)')'}, \  \dots, \\[0.5em]
\displaystyle G_2:=\frac{1}{(G_3(G_4(\cdots G_{n-1}(G_n(Y_nY^{-1}_1)')')'\cdots)'}.
\end{array}
\end{equation}
Then, $f_i=\phi(G_i)$ for each $1\leq i\leq n$, since $Y_iY^{-1}_1$ is a preimage of $\overline{Y}_{1,i}$ under $\phi$ and we applied the same recursion.  
Let $N_i$ and $D_i \in R$ such that $G_i=N_i/D_i$. Then one computes the normal form of $N_i$ and $D_i$ with respect to a reduced Gröbner basis for $I^e$ and shortens its fraction to obtain $\phi(G_i)=\phi(N_i)/\phi(D_i) \in F.$

We shall illustrate the above procedure through the following examples.
\begin{example} 
The set $\{\mathrm{E_{i,j}}\ | \ 1\leq i<j\leq n\}$ forms a $C$-basis of the lie algebra $\mathfrak{u}(n, C)$ and it is easily seen that the set $\{d\pi(\mathrm{E}_{1,2}), d\pi(\mathrm{E}_{2,3}),\dots, d\pi(\mathrm{E}_{n-1,n})\}$ forms  a $C-$basis of the Lie algebra of $$U(n, C)/[U(n,C), U(n,C)].$$
For pairwise distinct constants $c_2,\dots,c_n \in C$ define   
	\[
	 f_{2}:=x-c_2 ,\dots, f_n:=x-c_n \in F .
	\]
    Since the derivative of any element of $F$ has no simple poles in $C,$ it can be seen that  
    $\sum^{n}_{i=2}\lambda_i\frac{1}{f_i}$ is a derivative of an element of  $F$ if and only if $\lambda_i=0$ for all $i.$ From  \cite[Proposition 14]{KovacicInvProb} and \cite[Proposition 11]{KovacicInvProb}, it follows that the differential Galois group of the differential equation 
    $$Y'=AY \qquad \text{with} \quad  A:=\sum^{n-1}_{i=1}\frac{1}{f_{n+1-i}}E_{i,i+1}$$ 
    is isomorphic to $U(n, C).$ Equivalently, one chooses $f_1\in F^*$ so that for $\mathbf f=(f_1,f_2,\dots,f_n)\in (F^*)^n$ the operator $\mathcal L_\mathbf f\in F[\partial]$ is monic. We obtain that the differential Galois group of $\mathcal L_\mathbf{f}(y) =0$ is $U(n,C)$.
\end{example}	
		
		\begin{example}
			Let $U$ be the closed subgroup of $U(3,C)$ defined by the ideal $I:=\langle Z_{2,3}\rangle$ in the coordinate ring $ C[Z_{1,2}, Z_{1,3}, Z_{2,3}]$ of $U(3,n)$. Then 
             $$U=\left\{\begin{pmatrix}1&c_{1,2}&c_{1,3}\\0&1&0\\0&0&1\end{pmatrix}\ | \ c_{1,2}, c_{2,3}\in C\right\} \quad \text{and} \quad \mathfrak u=\left\{\begin{pmatrix}0&c_{1,2}&c_{1,3}\\0&0&0\\0&0&0\end{pmatrix}\ | \ c_{1,2}, c_{2,3}\in C\right\}.
            $$ 
            Since  $[U, U]=1,$  the induced map $d\pi: \mathfrak u\to \mathrm{Lie}(U/[U,U])$ is an isomorphism. Thus,  we choose the basis $\mathrm{E}_{1,2}$ and $\mathrm{E}_{1,3}$ of $\mathfrak u$ and elements $\frac{1}{x-c_1}, \frac{1}{x-c_2}\in F,$ where $c_1$ and $c_2$ are distinct constants, and let    
            $$ A_\mathfrak u :=\begin{pmatrix}0&\frac{1}{x-c_1}&\frac{1}{x-c_2}\\0&0&0\\0&0&0\end{pmatrix}.$$
Then, the derivatives of the indeterminates $Z_{1,2}, Z_{1,3}, Z_{2,3}$ in $R=F[Z_{1,2}, Z_{1,3},Z_{2,3}]$ are
\begin{equation}\label{eqn:derivativesofZ_ij}
Z'_{1,2}=\frac{1}{x-c_1}, \ Z'_{1,3}=\frac{1}{x-c_2} \ \text{and} \ Z'_{2,3}=0. 
\end{equation}
Moreover, the above discussion implies that $E:=\mathrm{Frac}(R/I^e)$ is a Picard-Vessiot extension of $F$ for $Y'=A_\mathfrak u Y$ with the differential Galois group $U(C)$. 

Consider the differential module $(E^3,\delta)$ with standard basis $\{ \mathrm{e}_1, \mathrm{e}_2,\mathrm{e}_3\}$ and derivatives   
$$\delta(\mathrm{e}_i)=\sum^3_{j=1}(A_\mathfrak{u})_{(i,j)}\mathrm{e}_j \quad \text{for} \ i=1,2,3.$$ 
We compute
$$\delta(\mathrm{e}_2)=\delta(\mathrm{e}_3)=0,\quad \delta(\mathrm{e}_1)=\frac{1}{x-c_1}\mathrm{e}_2+\frac{1}{x-c_2}\mathrm{e}_3,\quad \delta^{2}(\mathrm{e}_1)=\frac{-1}{(x-c_1)^2}\mathrm{e}_2+\frac{-1}{(x-c_2)^2}\mathrm{e}_3$$
and conclude that $\mathrm{e}_1$ is a cyclic vector. Then 
$$B:=\begin{pmatrix}1&0&0\\ 0& \frac{1}{x-c_1}& \frac{1}{x-c_2}\\ 0& \frac{-1}{(x-c_1)^2}& \frac{-1}{(x-c_2)^2}\end{pmatrix}
\quad \text{satisfies} \quad  \begin{pmatrix}\rm e_1\\ \delta(\rm e_1)\\ \delta^2(\rm e_1)\end{pmatrix}=B \begin{pmatrix}\rm e_1\\ \rm e_2\\ \rm e_3\end{pmatrix}.$$ 
Since the first entry of $BZ$ is $1,$ the matrix $B_0$ is here the identity matrix and so 
$$B_0BZ=\begin{pmatrix}1& Z_{1,2}&Z_{1,3}\\ 0& \frac{1}{x-c_1}& \frac{1}{x-c_1}Z_{2,3}+\frac{1}{x-c_2}\\ 0& \frac{-1}{(x-c_1)^2}& \frac{-1}{(x-c_1)^2}Z_{2,3}+\frac{-1}{(x-c_2)^2}\end{pmatrix}=:\mathrm{Wr}(1,Y_2Y_1^{-1},Y_2Y_1^{-1}).$$

Using \eqref{eqn:derivativesofZ_ij} the recursion in \eqref{eqn:definingG_i} with $Y_2Y_1^{-1}=Z_{1,2}$ and $Y_3Y_1^{-1}=Z_{1,3}$ becomes
$$G_3=\frac{1}{Z'_{1,2}}=x-c_1 \quad \text{and} \quad G_2=\frac{1}{\left(
	\frac{Z'_{1,3}}{Z'_{1,2}}\right)'}=\frac{1}{\left(Z_{2,3}+\frac{x-c_1}{x-c_2}\right)'}.$$
Reducing the numerators and denominators of $G_1$ and $G_2$ with respect to $Z_{2,3}$ we obtain  
$f_3:=\phi(G_3)=x-c_1 $ and $f_2:=\phi(G_2)=\frac{(x-c_2)^2}{c_1-c_2}$, where as above $\phi$ denotes the projection 
$\phi: R \to R/I^e$. Thus, $\mathrm{span}_C\{1, \overline{Z}_{1,2},\overline{Z}_{1,3}\}$ is a full set of solutions for the differential operator 
$$\partial f_2\partial f_3\partial=\frac{(x-c_2)^2(x-c_1)}{c_1-c_2}\partial^3+\frac{\left(2(x-c_2)^2+2(x-c_2)(x-c_1)\right)}{c_1-c_2}\partial^2+\frac{2(x-c_2)}{c_1-c_2}\partial \in F[\partial].$$
Choosing $f_1=\frac{1}{(x-c_2)^2(x-c_1)}$ and defining $\mathbf{f}=(f_1,f_2,f_3)$ we obtain that $E=F\langle \overline{Z}_{1,2},\overline{Z}_{1,3}\rangle$ is a Picard-Vessiot extension for the monic linear differential equation 
$$\mathcal{L}_\mathbf{f}(y)=y'''+2\left(\frac{1}{x-c_1}+\frac{1}{x-c_2}\right)y''+\frac{2}{(x-c_2)(x-c_1)}y'$$ as well as for the matrix differential equation
 $$Y'=\begin{pmatrix}0&\frac{1}{x-c_1}&0\\0&0&\frac{c_1-c_2}{(x-c_2)^2}\\0&0&0\end{pmatrix}Y.$$
\end{example} 

We summarize our results in Algorithm~\ref{alg:constructshapematrix} which computes a matrix differential equation $Y'=AY$ with $A$ as in \eqref{root-matrix} having a given group $U(C)$ as its differential Galois group. Using Corollary \ref{corollary} one can calculate from the entries of $A$ the scalar differential equation of the form $\mathcal L_\mathbf f(y)=0,$ whose differential Galois group is also $U(C)$. 
\begin{algorithm}
     \DontPrintSemicolon
\KwInput{
\begin{enumerate}
    \item The generators of the defining ideal $I \lhd C[Z_{i,j},\mathrm{det}(Z_{i,j})^{-1}]$ of $U(C) \subset U(n,C)$,
    \item the first basis vectors $X_1,\dots,X_l$ of a basis of $\mathfrak{u}$ such that $d\pi(X_1),\dots, d\pi(X_l)$ forms a basis of the Lie algebra of $(U/[U,U])(C)$,
    \item rational functions $\tilde{a}_1,\dots,\tilde{a}_l$ in $F$ such that their images in $F/ F'$ are $C-$linearly independent.
\end{enumerate}
}
\KwOutput{A matrix $A$ as in Corollary \ref{corollary} such that the differential Galois group of $Y'=AY$ is $U(C)$.
}
Define $A_{\mathfrak{u}}:=\sum_{i=1}^{l} \tilde{a}_i X_i$.\\
Compute a cyclic vector for $Y'=A_{\mathfrak{u}}Y$, that is a matrix $B \in \mathrm{GL}_n(F)$ such that 
\[B A_{\mathfrak{u}}B^{-1}+B'B^{-1}=:A_c\] is a companion matrix.\\

Let $Z$ be as in \eqref{genericpoint} and $B_0$ as in \eqref{(1,1)entryis1} with $Y_1$ replaced by $(BZ)_{1,1}$. Compute the matrix 
$$B_0 BZ.$$

Compute  in $\mathrm{Frac}(R)$ recursively the elements $G_{n},\dots,G_{2}$ as in \eqref{eqn:definingG_i} with $Y_2Y_1^{-1},\dots,Y_n Y_1^{-1}$ replaced by $(B_0 BZ)_{1,2},\dots,(B_0 BZ)_{1,n}$.\\

Let $N_i$, $D_i \in R$ such that $G_i=N_i/D_i$. Compute the normal forms of $N_i$ and $D_i$ in $R$ with respect to a reduced Gr\"obner basis of $I^e$ and denote the shortened fraction of the normal forms of $N_i$ and $D_i$ by $f_i$.

Define the matrix $$A=\sum_{i=1}^{n-1} \frac{1}{f_i} E_{i,i+1} $$
\Return{$A$}
\caption{}\label{alg:constructshapematrix}
\end{algorithm}
Note that for the construction of ${\bf f} \in (F^*)^n$ Algorithm~\ref{alg:constructshapematrix} uses the generators of the defining ideal $I$ of $U$ and a basis of the Lie algebra. We want to point out that it is sufficient that $U$ is given either by generators of its defining ideal $I$ or by a basis of its Lie algebra, since one can compute one from the other. Clearly, given generators of $I$ one can compute the defining linear equations of the tangent space of $U$ at the unit matrix whose solution space is the Lie algebra. If basis elements $X_1,\dots,X_m$ of the nilpotent Lie algebra $\mathfrak u$ of $\mathfrak u(n,C)$ are given, then we can compute generators of the defining ideal $I\lhd C[Z_{ij}| 1\leq i<j\leq n]$ of $U$ using 
$$\mathrm{exp}(x_1X_1+\dots+x_mX_m )$$ 
and Gr\"obner basis methods (cf.~\cite{cox2008ideals}),  where $\mathrm{exp}: \mathfrak u\to U$ is the surjective exponential map and $x_1,\dots,x_m$ are indeterminates over $C$.

%%%%%%%%%%%%%%%%%%%%%%%%%%%%%%%%%%
	\section{Stable Iterated Integrals}
	We shall apply now Theorem \ref{antiderivative-PV-commutative} to study stability problems in integration in finite terms.  
    \begin{definition}
    An element $g \in F$ is said to be elementary integrable over $F$, if there is an elementary extension $E$ of $F$  and an element $y \in E$ such that $y'=g.$  
	\end{definition}
	In \cite{rosenlicht1968liouville}, Rosenlicht proved Liouville's Theorem on integration in finite terms. It states that an element $g\in F$ is elementary integrable over $F$ if and only if there exist elements $u_1,\dots, u_n \in F^{*}$ and $v \in F$ and constants $c_1,\dots,c_n \in C$ such that 
    \begin{equation}\label{eqn:LiouvillesTheorem}
	g=v'+\sum^n_{i=1}c_i(u'_i/u_i).
    \end{equation}
	
	\begin{definition}
    \begin{enumerate}
        \item For an integer $n\in \mathbb{N},$ an element $g\in F$ is said to be elementary $n-$integrable over $F$, if there is an element $\eta$ in some elementary extension of $F$ such that $\eta^{(n)}=g.$  If such an element $\eta$ exists in $F$ itself, then we say $g$ is $n-$integrable in $F.$
	    \item An element $g\in F$ is said to be elementary $\infty-$integrable over $F$, if there is a sequence $\eta_1,\eta_2,\dots$ of elements with each $\eta_i$ belonging to some elementary extension of $F$ such that $\eta^{(i)}_i=g.$ If such a sequence $\eta_1,\eta_2,\dots$ of elements can be found in $F$ itself, then we say $g$ is  $\infty-$integrable in $F.$
	\end{enumerate}
    \end{definition}
	Note that if $g\in F$ is $n-$integrable in $F$ (respectively, elementary $n-$integrable over $F$)  for $n\in \mathbb N\cup\{\infty\}$, then it is also  $m-$integrable in $F$ (respectively, elementary $m-$integrable over $F$) for any $m\leq n.$ 
    The problem of finding $m-$integrable elements and $\infty-$integrable elements in a field has a strong connection to the theory of dynamical systems, as described in \cite{chen2022stability} and \cite{chenfengguo}. Our results are motivated by these articles, wherein, elements of $F$ that are $\infty-$integrable in $F$ are called the \emph{stable elements} of $F.$

	\begin{proposition}\label{existenceofxinF(y)}
		Suppose that $z'\neq 1$ for any $z \in F$. Let $F(y)$ be a differential field extension of $F$  with $y \notin F$ and $y' \in F$. Assume that $C$ is the field of constants of $F(y)$. For an element $g\in F$ and an integer $n\geq 2$ suppose that $g$ is $n-$integrable in $F(y)$, that is there exists $\eta\in F(y)$ with $\eta^{(n)}=g$. Assume further that $\eta^{(m)}\notin F$ for any $m<n.$ Then there exists an element $x\in F(y)$ with  $x'=1.$ 
        
		\end{proposition}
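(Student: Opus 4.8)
The plan is to write $\eta \in F(y)$ explicitly as a polynomial-type expression in $y$ and extract from the hypothesis $\eta^{(n)} = g \in F$ a linear relation among the derivatives of the coefficients that forces one of them to have derivative equal to a nonzero element of $C$; dividing by that constant then produces the desired $x$ with $x' = 1$. First I would invoke Proposition~\ref{prop:fundamentalresults1}\eqref{item1:fundamentalresults1}: since $z' \neq 1$ for all $z \in F$, and $y' \in F$, the field of constants of $F(y)$ being $C$ forces $y$ to be transcendental over $F$ (an algebraic $y$ with $y' \in F$ would, after adjusting by a constant, be a constant itself, or would violate the constants hypothesis). So $F(y) = F(y)$ with $y$ transcendental and $y' =: a \in F$. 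Moreover $F[y]$ is a differential subring, and in fact $F(y)$ is a Picard-Vessiot extension of $F$ for $Y'' - (a'/a)Y' = 0$ (or $Y''=0$ if $a \in C$), with Picard-Vessiot ring $F[y]$; so any iterated integral of $F$ inside $F(y)$ already lies in $F[y]$ by Theorem~\ref{Uni-PVring}. Hence I may assume $\eta = \sum_{i=0}^{d} b_i y^i$ with $b_i \in F$, $b_d \neq 0$, $d \geq 1$ (if $d = 0$ then $\eta \in F$, contradicting $\eta^{(n)} = g$ combined with $\eta^{(m)} \notin F$ for $m < n$ unless $n$ is trivial).

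Next I would run essentially the computation already carried out in the proof of Proposition~\ref{itofF(x)}, but keeping track of when the leading coefficient stabilizes. Differentiating $\eta$ repeatedly, the top-degree term evolves as $y^d$ times successive derivatives of $b_d$ (plus lower-order corrections coming from $y' = a$, which only lower the $y$-degree). More precisely, $\eta^{(m)} = b_d^{(m)} y^d + (\text{terms of } y\text{-degree} < d)$ when $a \in C$; when $a \notin F'$ is a general element of $F$ the leading term is $b_d^{(m)} y^d + \dots$ as well, since each differentiation of $y^d$ produces $d a y^{d-1}$, strictly lower degree. Since $\eta^{(n)} = g \in F$ and $y$ is transcendental over $F$, comparing the coefficient of $y^d$ gives $b_d^{(n)} = 0$. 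Now use the hypothesis $\eta^{(m)} \notin F$ for all $m < n$: I claim this forces $b_d^{(n-1)} \neq 0$. Indeed, if $b_d^{(n-1)} = 0$ then let $j$ be the smallest index with $b_j^{(n-1)} \neq 0$ among $j \geq 1$; if no such index exists then $\eta^{(n-1)} \in F$, contradicting the hypothesis, so such $j$ exists, and by minimality of $n$ one derives a contradiction by peeling off the top terms (replacing $\eta$ by $\eta$ minus its already-in-$F[y]$-with-constant-coefficients part, exactly as in Proposition~\ref{itofF(x)}). So there is some coefficient $b$ (either $b_d$ itself, or, after reducing to the genuinely "non-constant-coefficient" part of $\eta$, the leading coefficient of that part) and an integer $s$ with $2 \leq s \leq n$ such that $b^{(s)} = 0$ and $b^{(s-1)} \neq 0$. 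Then $b^{(s-1)} \in C^*$, and I would handle two cases: if $s - 1 = 1$, i.e. $b' \in C^*$, then $x := b/b' $ (scaling $b$ by the constant $1/b'$) satisfies $x' = 1$ — wait, more carefully, if $b' = c \in C^*$ then $x := b/c$ has $x' = 1$ and $x \in F(y)$ (in fact $x \in F$), done; if $s \geq 3$, then $b^{(s-1)} = c \in C^*$ and $b^{(s-2)} \in F$ with $(b^{(s-2)}/c)' = b^{(s-1)}/c = 1$, so $x := b^{(s-2)}/c$ works.

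The main obstacle, and the place needing the most care, is the bookkeeping that guarantees a non-constant coefficient of the right degree actually appears — i.e. correctly using the hypothesis "$\eta^{(m)} \notin F$ for all $m < n$" to pin down that the *last* surviving derivative before reaching $F$ is exactly the $(n-1)$-st and that this survival is witnessed by a coefficient whose $(n-1)$-st derivative is a nonzero constant. The subtlety is that $\eta^{(m)} \notin F$ could in principle be witnessed by lower-degree-in-$y$ terms rather than the leading one, so I need either an induction on $\deg_y(\eta)$ (subtracting off constant-coefficient polynomial pieces, which are harmless since they stay polynomial and their derivatives behave predictably) or a direct argument comparing, for the smallest $m$ with $\eta^{(m)} \in F$, the coefficients of all powers of $y$ simultaneously. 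I expect the cleanest writeup mirrors Proposition~\ref{itofF(x)}: reduce to the case where *every* coefficient $b_i$ with $i \geq 1$ that is not already constant is "active", take the largest such index, and extract the constant-derivative element from the chain $b, b', b'', \dots$ at the first place it dies. Everything else — transcendence of $y$, the structure of $F[y]$, the leading-term computation — is routine given the results already in the excerpt.
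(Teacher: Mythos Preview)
Your approach has a genuine gap, and it is precisely at the point you flagged as needing care. Any $x$ you build from a coefficient $b_i \in F$ via $x = b_i^{(s-2)}/b_i^{(s-1)}$ lies in $F$, which would contradict the standing hypothesis that $z' \neq 1$ for every $z \in F$. Worse, there need not be any non-constant coefficient at all: take $F = C$, $y$ transcendental with $y' = 1$, and $\eta = y^2/2$. Then $\eta' = y \notin F$ and $\eta'' = 1 \in F$, so $n = 2$ and all hypotheses hold, yet every $b_i$ is already a constant. Your reduction ``if no $j \geq 1$ has $b_j^{(n-1)} \neq 0$ then $\eta^{(n-1)} \in F$'' fails because the coefficient of $y^j$ in $\eta^{(m)}$ is not $b_j^{(m)}$ but a combination involving $a = y'$ and the higher $b_i$'s; in the example the coefficient of $y$ in $\eta'$ is $2 b_2 a + b_1' = 1$, not $b_1' = 0$. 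Subtracting off constant-coefficient pieces, as you propose, removes \emph{all} of $\eta$ here and leaves nothing to work with.

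The paper's proof sidesteps this by applying a nontrivial $\sigma \in \mathscr G(F(y)|F)$ and working with $\zeta := \sigma(\eta) - \eta \in F[y]$ rather than with the individual coefficients. Since $\sigma$ fixes $g$, one has $\zeta^{(n)} = 0$; the hypothesis $\eta' \notin F$ (which follows from $n \geq 2$ and $\eta^{(m)}\notin F$ for $m<n$) guarantees $\zeta \notin C$ for some $\sigma$. Taking the largest $m$ with $\zeta^{(m)} \neq 0$ gives $\zeta^{(m)} \in C^*$, and then $x := \zeta^{(m-1)}/\zeta^{(m)} \in F(y)$ satisfies $x' = 1$. The essential difference is that $\zeta$ genuinely lives in $F[y]$ and not in $F$, so the construction does not collide with the hypothesis on $F$.
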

	\begin{proof}
		 Let $\eta\in F(y)$ with $\eta^{(n)}=g \in F$ and $n\geq 2.$ Clearly, $F(y)$ is a Picard-Vessiot extenion of $F$ with Picard-Vessiot ring $F[y]=T(F(y)|F)$. Since $\eta$ satisfies a differential equation over $F$, we obtain that $\eta \in F[y]$, say
         $$\eta=a_ky^k+a_{k-1}y^{k-1}+\cdots+a_0,$$ where $a_i \in F$ and $a_k\neq 0.$
         
         For any differential automorphism $\sigma\in \mathscr G(F(y)|F),$ we have 
         $$g=\sigma(g)=\sigma(\eta^{(n)})=\sigma(\eta)^{(n)}$$ 
         and so $(\sigma(\eta)-\eta)^{(n)}=0$.  
		If $\sigma(\eta)-\eta \in C$ for all $\sigma\in \mathscr G(F(y)|F)$, then $\sigma(\eta')=\eta'$ for all $\sigma\in \mathscr G(F(y)|F)$ implying that $\eta'\in F.$ 
        This contradicts the assumption that $\eta^{(m)} \notin F$ for any $m<n$. Thus there exists $\sigma \in  \mathscr G(F(y)|F)$ such that $\sigma(\eta)-\eta$ is not a constant.
        
		We fix this $\sigma$ and choose now the largest integer $m$ such that $(\sigma(\eta)-\eta)^{(m)}\neq 0.$ Then $1\leq m< n$ and $(\sigma(\eta)-\eta)^{(m)}$ is a nonzero constant.
		Let $$x:=\frac{(\sigma(\eta)-\eta)^{(m-1)}}{(\sigma(\eta)-\eta)^{(m)}}$$ and observe that $x\in F(y)$ with $x'=1.$		\end{proof}
	
	We prove now a generalization of Liouville's Theorem for elementary $n-$integrable elements of $F.$ 
	
		\begin{theorem}\label{genofliouville}
		Let $g\in F$ and $n$ be a positive integer. Suppose that $g$ is elementary $n-$integrable over $F$ with $\eta^{(n)}=g$ for some $\eta$ belonging to some elementary extension of $F$ and that $\eta^{(m)}\notin F$ for any $m < n.$
		\begin{enumerate}[(i)] 
        \item \label{genofliouville-xinF}  
        If there exists an element $x\in F$ such that $x'=1$, then there exist elements $f_0, u_1,u_2,...,u_m\in F$ and polynomials $f_1,\dots, f_m\in C[x]$ with $\deg_x(f_i) < n$ for all $1\leq i\leq m$ such that  
        \begin{equation}\label{liouville-typeexpression-xinF} 
        \eta=f_0+f_1\log(u_1)+\cdots+f_m\log(u_m),
        \end{equation} 
		  or equivalently,  $$g=f^{(n)}_0+\sum_{i=1}^{m}\sum_{j=1}^{n} f_i^{(n-j)}\left(\frac{u_i'}{u_i}\right)^{(j-1)}.$$
        \item \label{genofliouville-xnotinF} 
        If there is no element $z\in F$ such that $z'=1$, then there exist  elements $f,u_1,u_2,...,u_m\in F,$ an element $x$ in some differential field extension of $F\langle \eta \rangle$ with constants $C$ such that $x'=1$ and a polynomial $P\in C[x]$ of degree at most $n$ and constants $c_1,\dots, c_m$ such that 
        \begin{equation}\label{liouville-typeexpression-xnotinF}
            \eta=P+f+c_1\log(u_1)+\cdots+c_m\log(u_m),
        \end{equation} 
        or equivalently, 
        $$g=c+f^{(n)}+\sum^m_{i=1}c_i\left(\frac{u'_i}{u_i}\right)^{(n-1)}.$$
        \end{enumerate}
	\end{theorem}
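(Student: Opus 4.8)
The plan is to deduce (i) by feeding the iterated integral $\eta$ into Theorem~\ref{antiderivative-PV-commutative} and then pinning down the shape of $\eta$ through the Galois action, and to reduce (ii) to (i) after first producing an element with derivative $1$. For (i), note first that since $m=0<n$ is permitted, the hypothesis already forces $\eta\notin F$; hence $E:=F\langle\eta\rangle$ is a proper differential extension of $F$ lying inside the given elementary extension, so it has $C$ as its field of constants. Because $\eta^{(n)}=g\in F$, the element $\eta$ is an iterated integral of $F$, and Theorem~\ref{antiderivative-PV-commutative} makes $E/F$ a Picard--Vessiot extension whose differential Galois group $G$ is a commutative unipotent linear algebraic group. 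In characteristic $0$ this forces $G\cong\mathbb{G}_a^{s}$ for some $s\ge1$, and by Proposition~\ref{Group-Extension}(ii) — together with the fact that the Picard--Vessiot ring is $F\otimes_C C[\mathbb{G}_a^{s}]$, a polynomial ring — we may write $T(E|F)=F[\eta_1,\dots,\eta_s]$ with $\eta_1,\dots,\eta_s$ algebraically independent over $F$, $\eta_i':=a_i\in F$, and with $G$ acting by the translations $\sigma(\eta_i)=\eta_i+c_i(\sigma)$, $c_i(\sigma)\in C$; here $\sigma\mapsto(c_i(\sigma))_i$ is injective with source and target of dimension $s$, hence an isomorphism of $G$ onto $\mathbb{G}_a^{s}(C)$. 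Finally, $\eta$ satisfies a homogeneous linear equation over $F$ — namely $\partial^n(\eta)=0$ if $g=0$ and $g\,\eta^{(n+1)}-g'\eta^{(n)}=0$ if $g\ne0$ — so $\eta\in T(E|F)$.

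Now comes the crucial step. Writing $\eta=\sum_\alpha b_\alpha\,\eta_1^{\alpha_1}\cdots\eta_s^{\alpha_s}$ with $b_\alpha\in F$, I would observe that for every $\sigma\in G$ we have $(\sigma(\eta)-\eta)^{(n)}=\sigma(g)-g=0$; since $x\in F\subseteq E$, $x'=1$, and $E$ has constants $C$, the kernel of $\partial^n$ on $E$ is exactly $C[x]_{<n}:=C\oplus Cx\oplus\cdots\oplus Cx^{\,n-1}\subseteq F$. Expanding $\sigma(\eta)-\eta$ in the algebraically independent monomials $\eta^\beta$, the coefficient of $\eta^\beta$ for $\beta\ne0$ is a polynomial in the $c_i(\sigma)$ with no constant term whose coefficients are nonzero integer multiples of the $b_\alpha$ with $\alpha>\beta$; since $\sigma(\eta)-\eta\in F$ and the $c_i(\sigma)$ range over all of $C^s$, this polynomial vanishes identically and $b_\alpha=0$ for all $\alpha>\beta$. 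As every multi-index of length $\ge2$ strictly dominates a nonzero one, we get $\eta=b_0+\sum_{i=1}^s b_i\eta_i$ with $b_0,b_i\in F$; and then $\sigma(\eta)-\eta=\sum_i c_i(\sigma)b_i$ must lie in $C[x]_{<n}$ for all $\sigma$, so letting $\sigma$ vary shows each $b_i\in C[x]_{<n}$. To conclude, I would apply Liouville's theorem (Rosenlicht, \eqref{eqn:LiouvillesTheorem}) to $a_i=\eta_i'\in F$, writing $a_i=p_i'+\sum_k c_{i,k}(w_{i,k}'/w_{i,k})$ with $p_i,w_{i,k}\in F$, $c_{i,k}\in C$. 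In a differential extension with constants $C$ containing suitable logarithms, $\eta_i-p_i-\sum_k c_{i,k}\log w_{i,k}$ has derivative $0$, hence is a constant that may be absorbed into $p_i$; substituting gives $\eta=\big(b_0+\sum_i b_ip_i\big)+\sum_{i,k}(c_{i,k}b_i)\log w_{i,k}$, where $b_0+\sum_i b_ip_i\in F$ (because $b_i\in C[x]\subseteq F$) and each coefficient $c_{i,k}b_i$ lies in $C[x]$ of degree $<n$. Reindexing the logarithms gives \eqref{liouville-typeexpression-xinF}, and differentiating $n$ times by the Leibniz rule — the $\log$-terms dropping out since $\deg_x f_i<n$ — gives the equivalent identity for $g$.

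For (ii): again $\eta\notin F$. I would first manufacture an integral of $1$: if some element of the given elementary extension $E_0$ has derivative $1$ we keep it, and otherwise we adjoin a transcendental $x$ with $x'=1$ to $E_0$, which by Proposition~\ref{prop:fundamentalresults1}(i) leaves the constants equal to $C$. Either way we obtain a differential extension $\widehat E$ of $F\langle\eta\rangle$ with constants $C$ containing an element $x$ with $x'=1$, necessarily transcendental over $F$; moreover $\widehat E$ is elementary over $F(x)$, since adjoining the elementary generators of $E_0/F$ to $F(x)$ keeps them elementary. Let $\widetilde n$ be the least integer with $\eta^{(\widetilde n)}\in F(x)$, which exists and is $\le n$. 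If $\widetilde n=0$, then $\eta\in F(x)$ is an iterated integral of $F$, so Proposition~\ref{itofF(x)} gives $\eta=f+P$ with $f\in F$, $P\in C[x]$, $\deg_xP\le n$, whence $g=\eta^{(n)}=f^{(n)}+P^{(n)}$ with $P^{(n)}\in C$ — this is \eqref{liouville-typeexpression-xnotinF} with no logarithms. If $\widetilde n\ge1$, Proposition~\ref{itofF(x)} applied to $\eta^{(\widetilde n)}\in F(x)$ writes it as $f_\psi+P_\psi$ with $f_\psi\in F$, $P_\psi\in C[x]$; subtracting a $\widetilde n$-fold $\partial$-antiderivative $Q\in C[x]$ of $P_\psi$ (available since $x'=1$) leaves $\eta-Q$ with $(\eta-Q)^{(\widetilde n)}=f_\psi\in F$ and $(\eta-Q)^{(m)}\notin F(x)$ for $m<\widetilde n$, while $\eta-Q$ lies in $\widehat E$, which is elementary over $F(x)$. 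Applying part (i) over the base field $F(x)$ (with $\widetilde n$ in place of $n$) then yields $\eta=Q+\Phi_0+\sum_j\phi_j\log u_j$ with $\Phi_0\in F(x)$, $\phi_j\in C[x]$ of degree $<\widetilde n\le n$, and $u_j\in F(x)^{*}$.

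It remains to descend this representation to $F$, which is where the real work lies. I would factor each $u_j\in F(x)^{*}$ as an element of $F^{*}$ times monic polynomials in $F[x]$, turning $\eta$ into $\Phi_0$ plus a $C[x]_{<n}$-combination of logarithms $\log\gamma$ with $\gamma\in F^{*}$ and $\log q$ with $q\in F[x]$ monic irreducible. Differentiating $n$ times and using that $g=\eta^{(n)}\in F$ has $x$-degree $0$ and no poles in $x$, while $\partial$ maps $F[x]$ into $F[x]$ and strictly raises the order of every pole of $F(x)$ supported over $F$, I expect to conclude successively that the coefficients of the $\log q$ vanish; that $\Phi_0$ has no poles over $F$, i.e. $\Phi_0\in F[x]$; that the coefficients of the surviving $\log\gamma$ are constants; and — using that $F$ has no element of derivative $1$ — that the coefficients of every positive power of $x$ occurring (in $\Phi_0$ and coming from $Q$) are constants, so that all the $x$-dependence collects into a single $P\in C[x]$ (with $\deg_xP\le n$, forced since $P^{(n)}\in C[x]\cap F=C$) while the rest of $\Phi_0$ lies in $F$. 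This gives \eqref{liouville-typeexpression-xnotinF}, and differentiating $n$ times gives the equivalent identity. The principal obstacles I anticipate are the rigidity argument in (i) forcing $\eta$ to be affine-linear in the $\eta_i$ with $C[x]_{<n}$-coefficients, and in (ii) the pole- and degree-bookkeeping over $F(x)$ needed to eliminate the genuinely $x$-dependent logarithms and to collapse the $x$-dependence into one polynomial; the remaining steps are direct applications of the cited structural results, Rosenlicht's theorem, and the Leibniz rule.
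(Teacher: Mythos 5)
For part \eqref{genofliouville-xinF} your argument is correct and takes a genuinely different route from the paper. The paper applies Liouville to each $\eta_i$ first, uses Kolchin's theorem \cite{kolchin1968algebraic} to embed $F\langle\eta\rangle$ into $F(\log(u_1),\dots,\log(u_m))$ with the $\log(u_i)$ algebraically independent, and then pins down the degree of $\eta$ in each $\log(u_m)$ by comparing coefficients under the Galois action. You instead perform the rigidity argument inside the Picard--Vessiot ring $F[\eta_1,\dots,\eta_s]$ itself, using the full translation action of $\mathbb{G}_a^s$ to force $\eta=b_0+\sum_i b_i\eta_i$ with $b_i\in C[x]_{<n}$, and only afterwards invoke Liouville once per $\eta_i'$; this avoids Kolchin's theorem and the $\log(u_m)$-degree analysis. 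Two small points should be made explicit: that the $\eta_i$ can be chosen algebraically independent with $T(E|F)=F[\eta_1,\dots,\eta_s]$ and with the image of $\mathscr G(E|F)$ equal to all of $C^s$ (an Ostrowski-type refinement the paper also asserts), and that each $\eta_i$ lies in the given elementary extension of $F$, which is what licenses applying Rosenlicht's theorem \eqref{eqn:LiouvillesTheorem} to $\eta_i'$.

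Part \eqref{genofliouville-xnotinF} is where your proposal has a genuine gap. Your reduction (adjoin $x$ keeping constants $C$, note $\widehat E$ is elementary over $F(x)$, use Proposition~\ref{itofF(x)} and part \eqref{genofliouville-xinF} over the base $F(x)$) is sound and is a legitimately different strategy from the paper's induction on $n$ (which runs through Liouville over $F(y)$, Rosenlicht's Lemma, Proposition~\ref{existenceofxinF(y)} and Proposition~\ref{itofF(x)}). But the descent from $F(x)$ to $F$, which you yourself identify as ``the real work,'' is only asserted, and two of the asserted steps are false as stated. After writing $\eta=Q+\Phi_0+\sum_j\phi_j\log u_j$ and killing the $x$-dependent logarithms and the poles of $\Phi_0$ (which can indeed be done by a pole-order cascade, though the case of roots lying in $C$ needs its own argument), you claim that degree bookkeeping on $g=\eta^{(n)}$ forces the coefficients of the surviving $\log\gamma$, $\gamma\in F^*$, and of the positive powers of $x$ to be constants. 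This is not forced: take $F=C(w)$ with $w'=w$ (so no element of $F$ has derivative $1$), $\gamma=w$, so $\gamma'/\gamma=1$ and $\log\gamma=x+d$; then $\eta=x\log\gamma$ is a valid representation with the non-constant coefficient $\phi=x$, even though $\eta=x^2+dx$ has the required form. The obstruction is that $x$ and the $\log\gamma_j$ may be algebraically dependent over $F$, and the $n$-times differentiated identity only constrains $g$, not the decomposition of $\eta$, so no amount of coefficient comparison in that identity can rule such representations out or convert them. What is missing is (a) a Kolchin--Ostrowski reduction to the case where $x,\log\gamma_1,\dots,\log\gamma_r$ are algebraically independent over $F$, and (b) an argument about $\eta$ itself in that situation --- for instance a rigidity argument on the vector-group extension $F(x,\log\gamma_1,\dots,\log\gamma_r)|F$, mirroring your part \eqref{genofliouville-xinF} but now with $\sigma(x)=x+a_\sigma$ as well as $\sigma(\log\gamma_j)=\log\gamma_j+b_{j,\sigma}$, from which $\phi_j\in C$ and $\Phi_0\in F+C[x]$ do follow. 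Without (a) and (b) the proposed bookkeeping does not close, so part \eqref{genofliouville-xnotinF} is not proved as written.
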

		\begin{proof} 
			\eqref{genofliouville-xinF}: 
			Let $g$ be elementary $n-$integrable over $F$ with $\eta^{(n)}=g,$ where $\eta$ belongs to an elementary extension of 
            $F.$ To avoid triviality, we may assume that $z^{(n)}\neq g$ for any $z\in F.$ Since $\eta$ is an iterated integral 
            and there exists $x \in F$ with $x'=1$, it follows from Theorem \ref{antiderivative-PV-commutative} that $F\langle 
            \eta\rangle$ is Picard-Vessiot extension of $F$ with commutative unipotent differential Galois group. Then by 
            Theorem~\ref{Group-Extension} \eqref{PV-AE} it follows that  
            $$F\langle \eta\rangle=F(\eta_1,\eta_2,...,\eta_r),$$ 
            where $\eta_1,\dots,\eta_r$ are algebraically independent over $F$ with each $\eta'_i\in F.$  Since each $\eta_i$ 
            belongs to an elementary extension of $F,$ it follows from Liouville's Theorem, more precisely from the integration of 
            \eqref{eqn:LiouvillesTheorem}, that for each $i\in \{1,\dots,r\}$ there exist an integer $m_i\geq 0,$ elements 
            $u_{i,1},\dots,u_{i,m_i}, v_i\in F$ and constants $c_{i,1},\dots, c_{i,m_i}$ such that   
            $$\eta_i=v_i+\sum_{j=1}^{m_i}c_{i,j}\log(u_{i,j}).$$ 

            Let $\{u_1,\dots, u_m\}\subseteq \{u_{i,j}\ | \ 1\leq i\leq r, \  1\leq j\leq m_{i}\}$ be one of the largest subset such that 
			$$\log(u_1), \log(u_2), \dots ,\log(u_m)$$ 
            are algebraically independent over $F$. 
            It then follows from \cite{kolchin1968algebraic} that 
            $$F\langle \eta\rangle=F(\eta_1,\eta_2, \dots ,\eta_r)\subseteq F(\log(u_1),\log(u_2),\dots ,\log(u_m)).$$ 
            
	        Since  $F[\log(u_1),\dots, \log(u_m)]$ is the Picard-Vessiot ring of $F(\log(u_1),\log(u_2),\dots ,\log(u_m)),$ 
            there exist  $a_0, \dots, a_l\in F[\log(u_1),\log(u_2),\dots,\log(u_{m-1})]$ with $a_l\neq 0$ such that
            \begin{equation}\label{eqn:representationetainlog(u_m)}
            \eta=a_0+a_1\log(u_m)+ \dots +a_l \log^l(u_m).
            \end{equation}
  
 Since $\eta^{(n)}=g\in F,$ the $C-$vector space $\{\eta,1, x,\dots, x^{n-1}\}$, which is the solution space of the respective differential equation \eqref{diffeqn-ItInt} for $g$, is stabilized by $\mathscr G(F\langle \eta\rangle |  F)$. The differential Galois group $ \mathscr G(F(\log(u_1),\log(u_2),\dots,\log(u_m)) |  F)$ is a commutative unipotent group and so the group homomorphism 
 \[
  \mathscr G(F(\log(u_1),\log(u_2),\dots,\log(u_m)) |  F) \rightarrow \mathscr G(F\langle \eta\rangle |  F)
 \]
 given by restriction to $F\langle \eta\rangle$ is surjective. Thus, for all $\sigma \in \mathscr G(F(\log(u_1),\log(u_2),\dots,\log(u_m)) |  F)$ there exist $P_{\sigma}(x)\in C[x]$ with $\text{deg}_x(P_{\sigma}(x))\leq n-1$ and $c_{m,\sigma}\in C$ such that 
\begin{eqnarray}
    \sigma (\eta) &=& \eta+P_{\sigma}(x) \, \label{eqn:1} \, ,  \\
    \sigma(\log(u_m))&=&\log(u_m)+c_{m,\sigma}. \label{eqn:2}
\end{eqnarray}

     Applying $\sigma$ to equation~\eqref{eqn:representationetainlog(u_m)} and then using \eqref{eqn:1}, \eqref{eqn:2} and again \eqref{eqn:representationetainlog(u_m)} to replace $\eta$, we obtain
     $$a_0+a_1\log(u_m)+\dots+a_l \log^l(u_m)+P_\sigma(x)=\sigma(a_0)+\sigma(a_1)(\log(u_m)+c_{m,\sigma})+\dots+\sigma(a_l)(\log(u_m)+c_{m,\sigma})^l.$$ 
	 
	 Comparing the coefficients of $\log^l(u_m),$ we conclude $\sigma (a_l)=a_l$ and so $a_l\in F$. If $l>1,$ then from comparing the coefficients of $\log^{l-1}(u_m),$ we obtain $lc_{m,\sigma}a_l+\sigma (a_{l-1})=a_{l-1}$. Thus 
     $\sigma (a_{l-1}/(l a_l ))= a_{l-1}/(l a_l )-c_{m,\sigma}$
     and so 
     $$\sigma \left(\log(u_m)+(a_{l-1}/(la_l))\right)=\log(u_m)+c_{m,\sigma}+(a_{l-1}/(la_l))-c_{m,\sigma}=\log(u_m)+(a_{l-1}/(la_l))$$
      for all $\sigma \in \mathscr G(F(\log(u_1),\log(u_2),\dots,\log(u_m)) |  F)$. Hence, $\log(u_m)+a_{l-1}/(la_l)\in F$ implying that $\log(u_m)\in F[\log(u_1),\dots,\log(u_{m-1})].$ This contradicts our assumption that $\log(u_1),\log(u_2),\dots,\log(u_m)$ are algebraically independent over $F.$ Thus $l\leq 1$ and so $\eta=a_0+a_1\log(u_m)$ with $a_1\in F$ and $a_0\in F[\log(u_1),\log(u_2),\dots,\log(u_{m-1})].$ 
	   
       Since the above calculations hold for each $\log(u_i)$ with $1\leq i\leq m$, we conclude that there are elements $f_0, f_1\dots,f_{m}\in F$ such that 
     \begin{equation}\label{liouvilletypeexp-eta}
     \eta=f_0+f_1\log(u_1)+\cdots+f_m\log(u_m).
     \end{equation} 
	 It is left to show that $f_i \in C[x]$ for $1\leq i \leq m$. To this end let 
     $$\tau\in \mathscr G(F(\log(u_1),\log(u_2),\dots,\log(u_{m}))| F(\log(u_1),\log(u_2),\dots,\log(u_{m-1}))$$ 
     be a nontrivial differential automorphism. Then, $\tau (\log(u_i))=\log(u_i)$ for all $1\leq i\leq m-1$ and $\tau (\log(u_m))=\log(u_m)+c_{m,\tau}$ with $c_{m,\tau}\neq 0$ and so it follows from \eqref{liouvilletypeexp-eta} that 
     $\tau$ fixes $\eta-f_m \log(u_m).$ 
     As above (cf.~\eqref{eqn:1}) there exists a polynomial $P_{\tau}(x) \in C[x]$ with $\deg(P_{\tau}(x) ) \leq n- 1$ 
     such that $\tau(\eta)=\eta+P_{\tau}(x).$ 
     Hence, applying $\tau$ to $\eta-f_m \log(u_m)$ we obtain 
     $$\eta+P_{\tau}(x)-f_m(\log(u_m)+c_{m,\tau})=\eta-f_m \log(u_m).$$
	  Thus, $f_m=c^{-1}_{m,\tau}P_\tau(x)$ is a polynomial in $C[x]$ with $\deg(f_m) \leq n-1.$ Similarly, for each $1\leq i\leq m,$ we may choose an appropriate differential automorphism and conclude that $f_i$ is a polynomial in $C[x]$ of degree less than $n$. This proves \eqref{genofliouville-xinF}.

(\ref{genofliouville-xnotinF}): Assume that there is no element $z\in F$ with $z'=1.$ We prove now by induction on $n$ that there 
are constants $c_1,\dots, c_m$ and elements $f, u_1,\dots, u_m \in F$ and a polynomial $P\in C[x]$ of degree  $\leq n$ where $x$ is in some differential field extension of $F\langle\eta \rangle$ with $x'=1$ such that 
\begin{equation} \label{inductionstatement}
\eta =f+c_1\log(u_1)+\cdots+c_m\log(u_m)+P.
\end{equation}
Note that for $n=1$ this follows from Liouville's Theorem, more precisely from integrating Equation~\eqref{eqn:LiouvillesTheorem}.
Let $n>1$ and assume that the statement is true for $n-1$. Since $(\eta')^{(n-1)}=\eta^{(n)}=g\in F$ the induction assumption implies that there are constants $e_1,\dots, e_l$ and elements $h, g_1,\dots, g_l\in F$ and $P_1\in C[x]$ of degree $\leq n-1$  such that  
\begin{equation}\label{inductionassumption}
\eta'=h+e_1\log(g_1)+\cdots+e_l\log(g_l)+P_1.
\end{equation} 
Let $y=e_1\log(g_1)+\dots+e_l \log(g_l)$ and observe that $y'=e_1(g'_1/g_1)+\dots+e_l(g'_l/g_l)\in F$. We consider now $\eta'-P_1=y+h$ and we will show that 
\begin{equation}\label{eqn:1234}
y+h=P_2+f'+c_1(u'_1/u_1)+\cdots+c_m(u'_m/u_m) 
\end{equation}
for some constants $c_1,\dots,c_m,$ elements $f,u_1,\dots,u_m\in F$ and a polynomial $P_2\in C[x]$ with $\deg(P_2) \leq n-1$. Integration then gives us \eqref{inductionstatement}, where $P$ in \eqref{inductionstatement} is such that $P'=P_1+P_2$. 

Let $Q\in C[x]$ such that $Q'=P_1$. 

First, assume that $y\in F$. Since $\eta'\notin F$, it follows that $\deg(P_1)\geq 1$. This implies that $P_1\in F\langle\eta'\rangle$ and so that $x\in F \langle \eta'\rangle$. Since $(\eta-Q)'\in F$ and $\eta-Q$ belongs to an elementary extension of $F$, we can apply Liouville's Theorem to $(\eta-Q)'=\eta'-P_1$, which shows the existence of the respective elements such that equation \eqref{eqn:1234} holds.

 Assume now that $y\notin F.$  Then $y$ is transcendental over $F$ and we have $\eta'-P_1=y+h\in F[y].$

 If $\deg(P_1)\geq1$, then $P_1$ belongs to an elementary extension of $F(y)$ and so does $x$. We first apply Liouville's Theorem for $F(y)$ to $(\eta-Q)'$ and obtain an integer $m\geq 1,$  $\mathbb Q-$linearly  independent constant $c_1,\dots, c_m,$  nonzero elements $u_1,\dots, u_m\in F(y)$ and an element $\gamma \in F(y)$ such that $$\gamma'+\sum^m_{i=1}c_i\frac{u'_i}{u_i}=(\eta-Q)'.$$    
 Since $(\eta-Q)'=y+h\in F[y],$ we further deduce  from (\cite[Lemma]{rosenlicht1968liouville}) that each $u_i\in F.$
 Thus, $\gamma^{(n)}\in F,$ where $\gamma\in F(y).$ If $r\leq n$ is the smallest integer with $\gamma^{(r)}\in F$, then $r\geq 2$ as $y\notin F$. We can now apply Proposition (\ref{existenceofxinF(y)}) to $\gamma$ which shows that there exists $\tilde{x} \in F(y)$ with $\tilde{x}'=1$. Since $x$ lies in some differential field extension of $F\langle \eta \rangle$ with constants $C$, which contains $F(y)$, we conclude that $\tilde{x}=x+c$. Thus, $x$ is an element of $F(y)$. From Proposition~\ref{itofF(x)} applied to $\gamma$ we obtain  
 $$\gamma =P+f,$$ 
 where $P\in C[x]$ is a polynomial of degree at most $n$ and $f\in F.$ This shows the existence of the respective elements such that equation~\eqref{eqn:1234} holds.  

If $\deg(P_1)=0$, we repeat the same argumentation as in case $\deg(P_1)\geq 1$ with $h$ replaced by $h+P_1$ and $Q$ replaced by $0$. From this discussion, we obtain that 
\[
\eta'=\gamma' + \sum^m_{i=1}c_i\frac{u'_i}{u_i}
\]
with $\gamma=P+f$, where $f \in F$ and $P \in C[x]$ is a polynomial of degree at most $n$.
 
 \end{proof}

%%%%%%%%%%%%%%%%%%%%%%%%%%%%%%%%%%%%%%%%%%%%%%%%%%%%%%%%%%%%%%%%%%%%%%%%%%%%%%%%%%%%%%%%%%%%%%%%%%%%%%%%%%%%%%%%

%%%%%%%%%%%%%%%%%%%%%%%%%%%%%%%%%%%%%%%%%%%%%%%%%%%%%%%%%%%
	
	%%%%%%%%%%%%%%%%%%%%%%%%%%%%%%%%%%%%%%%%%%%%%%%%%%%%%%%	%%%%%%%%%%%%%%%%%%%%%%%%%%%%%%%%%%%%%%%%%%%%%%%%%%%%%%%	%%%%%%%%%%%%%%%%%%%%%%%%%%%%%%%%%%%%%%%%%%%%%%%%%%%%%%%	
	\section{$n-$integrable and $\infty-$integrable elements of certain differential fields}
	
	\begin{example}
		Let  $F=\mathbb C(x)$ with  $x'=1.$ Then, for every $n\in \mathbb N$, we have 
		$$\frac{1}{x}=\sum_{i=1}^{n}\binom ni \left(\frac{x^{n-1}}{(n-1)!}\right)^{(n-i)}\left(\frac{1}{x}\right)^{(i-1)}=\left(\frac{x^{n-1}}{(n-1)!}\log(x)\right)^{(n)}$$ and thus $1/x$ is elementary $n-$integrable over $C(x).$ Writing elements of $F$ in its partial fraction expansion form, one can easily conclude that $f\in F$ is $\infty-$integrable in $F$ if and only if $f\in C[x].$ 
	\end{example}

	%%%%%%%%%%%%%%%%%%%%%%%%%%%%%%%%%%%%%%%%%%%%%%%%%%%%%%%	%%%%%%%%%%%%%%%%%%%%%%%%%%%%%%%%%%%%%%%%%%%%%%%%%%%%%%%	%%%%%%%%%%%%%%%%%%%%%%%%%%%%%%%%%%%%%%%%%%%%%%%%%%%%%%%	
	
	\begin{proposition} \label{inftinteg-simpleextn}
		Let $F(y)$ be a differential field extension of $F$ with $y$ transcendental over $F$ and assume that the constants of $F(y)$ are $C.$ Denote by $\bar{F}$ the algebraic closure of $F$.
		\begin{enumerate}[(i)]	\item \label{firstordercase} 
        If $y'= ay+b$ with $a$, $b \in F$ and $(y-c)'/(y-c)\notin F$ for any $c\in \bar{F}$, then $\infty-$integrable elements of $F(y)$ belong to $F[y].$ \\
		\item \label{expcase} If $y'=ay$ for some $0\neq a\in F$, then $\infty-$integrable elements of $F(y)$ belong to $F[y, y^{-1}].$
		\end{enumerate}
		\end{proposition}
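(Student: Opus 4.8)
The plan is to prove both statements by a place-by-place valuation analysis on the rational function field, the point being that $\infty$-integrability forbids $g$ from having poles except at a harmless set of places. First I would pass to the algebraic closure: extend the derivation of $F$ uniquely to $\bar{F}$ (possible since $\mathrm{char}\, F=0$) and then to $\bar{F}(y)$ by the same rule $y'=ay+b$, so that the derivation on $\bar{F}(y)$ restricts to the given one on $F(y)$; since $\bar{F}(y)/F(y)$ is algebraic and $C$ is algebraically closed, the field of constants of $\bar{F}(y)$ is again $C$. For $c\in\bar{F}$ let $v_c$ be the valuation of $\bar{F}(y)$ at the place $y=c$, with valuation ring $\mathcal{O}_c$; a one-line check shows $\mathcal{O}_c$ is a differential subring of $\bar{F}(y)$. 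I would then record two elementary facts: an element $h\in F(y)$ lies in $F[y]$ (resp.\ in $F[y,y^{-1}]$) if and only if $v_c(h)\ge 0$ for every $c\in\bar{F}$ (resp.\ for every $c\in\bar{F}\setminus\{0\}$), which follows by writing $h$ in lowest terms over $F$ and inspecting the roots of its denominator; and, by definition, if $g\in F(y)$ is $\infty$-integrable in $F(y)$ then for every positive integer $n$ there is $\eta_n\in F(y)$ with $\eta_n^{(n)}=g$.

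The technical heart is a local estimate at a single place. From $(y-c)'=a(y-c)+(ac+b-c')$ one sees that $v_c((y-c)')=0$ precisely when $ac+b-c'\ne 0$, which is in turn equivalent to $(y-c)'/(y-c)\notin F$; call such a $c$ \emph{good}. I claim that for a good $c$ and any $h\in\bar{F}(y)$: if $v_c(h)\ge 0$ then $v_c(h')\ge 0$, while if $v_c(h)=m<0$ then $v_c(h')=m-1$. The first assertion holds because $\mathcal{O}_c$ is a differential ring; for the second, write $h=(y-c)^m u$ with $u$ a unit of $\mathcal{O}_c$, differentiate, and use $v_c((y-c)')=0$ together with $\mathrm{char}\,F=0$ to see that the leading term does not cancel. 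Iterating gives, for good $c$, that $v_c(h^{(n)})\ge 0$ whenever $v_c(h)\ge 0$, and $v_c(h^{(n)})=v_c(h)-n$ whenever $v_c(h)<0$.

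Next I would pin down the good places. In (i) the hypothesis says exactly that $(y-c)'/(y-c)\notin F$ for every $c\in\bar{F}$, so every $c\in\bar{F}$ is good. In (ii) we have $b=0$, so $c=0$ is not good (there $(y-c)'/(y-c)=a\in F$), but every $c\in\bar{F}\setminus\{0\}$ is good: if instead $c'=ac$, then $(y/c)'=0$, so $y/c$ is a constant of $\bar{F}(y)$, hence $y/c\in C\subseteq\bar{F}$ and $y\in\bar{F}$, contradicting the transcendence of $y$ over $F$. Now suppose $g$ has a pole at a good place $c$, say $v_c(g)=-k$ with $k\ge 1$, and apply $\infty$-integrability with $n=k$: if $v_c(\eta_k)\ge 0$ then $v_c(g)=v_c(\eta_k^{(k)})\ge 0$, a contradiction, and if $v_c(\eta_k)=-m<0$ then $v_c(g)=-m-k\le -k-1$, again a contradiction. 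Hence $g$ has no pole at any good place, which in case (i) means $v_c(g)\ge 0$ for all $c\in\bar{F}$, so $g\in F[y]$, and in case (ii) means $v_c(g)\ge 0$ for all $c\ne 0$, so $g\in F[y,y^{-1}]$.

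The only step that is not routine bookkeeping is the local estimate $v_c(h^{(n)})=v_c(h)-n$ at good places, and even that reduces to the identity $v_c((y-c)')=0$ --- which is exactly where the hypothesis of (i) and the transcendence of $y$ in (ii) are used. I expect the bulk of the write-up to consist of making that local computation precise and verifying the two reductions $\bar{F}[y]\cap F(y)=F[y]$ and $\bar{F}[y,y^{-1}]\cap F(y)=F[y,y^{-1}]$; notably, the approach sketched here uses only valuation theory and not the Picard--Vessiot machinery of the earlier sections.
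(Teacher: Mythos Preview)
Your proposal is correct and follows essentially the same approach as the paper: both arguments work over $\bar{F}$, analyze the order of $g$ and its antiderivatives at each finite place $y=c$, use the key computation $(y-c)'=a(y-c)+(ac+b-c')$ to show that at a ``good'' place (one with $ac+b-c'\neq 0$) differentiation lowers a negative order by exactly one, and then choose the number of antiderivatives equal to the pole order of $g$ to reach a contradiction. The only cosmetic differences are that the paper phrases the local computation via the embedding $\bar{F}(y)\hookrightarrow\bar{F}((y-c))$ rather than in valuation language, and that you make explicit the reduction $\bar{F}[y]\cap F(y)=F[y]$ (resp.\ $\bar{F}[y,y^{-1}]\cap F(y)=F[y,y^{-1}]$), which the paper leaves implicit.
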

	
	\begin{proof}
		Let $g\in F(y)$ be $\infty-$integrable and for $n \in \mathbb{N}$ let $\eta \in F(y)$ such that $g=\eta^{(n)}$.  For $c\in \bar{F}$ we can embed $\bar{F}(y-c)$ in $\bar{F}((y-c))$ as differential fields. Then, there exist $m \in \mathbb{Z}$ and elements $0\neq \lambda_m ,\lambda_{m+1},\dots \in \bar {F}$ such that 
		\begin{equation}\label{eqn:powerseries}
        \eta =\lambda_m (y-c)^m+ \lambda_{m+1}(y-c)^{m+1} + \dots \in \bar{F}((y-c)) \, .
        %\text{higher powers of }  y-c, \quad \lambda_m\neq 0.
        \end{equation}
        Assume that $y'=ay+b$ with $a, b\in F.$ Then for any $c$, $\lambda \in \bar{F}$ and $m \in 
        \mathbb{Z}$ we observe that
        \begin{equation}\label{eqn:derivativeforpowerseries}
        \left(\lambda(y-c)^m\right)'=m\lambda(ac+b-c')(y-c)^{m-1}+m\lambda a(y-c)^m+\lambda'(y-c)^m  
        \end{equation}
        and so $\mathrm{ord}_c\left(\left(\lambda(y-c)^m\right)'\right)\geq m-1.$
        Moreover, if $m=\mathrm{ord}_c(\eta)\geq 0,$ then we can prove inductively that $\mathrm{ord}_c(\eta^{(i)})\geq 0$ for any $i\in \mathbb N.$ In particular, we have that $\mathrm{ord}_c(g)\geq 0.$

With these observations, we shall move on to prove (\ref{firstordercase}) and (\ref{expcase}).
		
		(\ref{firstordercase}): We only need to show that $g$ has no poles at any $c\in \bar{F}.$ 
        Assume on the contrary that $g$ has a pole at $c\in \bar{F}$ with $\mathrm{ord}_c(g)=r <0$. Since $g$ is $\infty$-integrable, we can choose $\eta\in F(y)$ such that $g=\eta^{(|r|)}$. Because  $\mathrm{ord}_c(g)<0$, we must have $\mathrm{ord}_c(\eta)=m<0$, since otherwise the earlier observations would imply that $\mathrm{ord}_c(g)\geq0.$ Since by assumption $(y-c)'/(y-c)\notin F$ for any $c \in \bar{F}$ we get $ac+b-c'\neq 0$ and we conclude by taking the derivative of \eqref{eqn:powerseries} and using \eqref{eqn:derivativeforpowerseries} that $\mathrm{ord}_c(\eta')=m-1.$ We can continue now inductively and show that $\mathrm{ord}_c\left(\eta^{(|r|)}\right)=m+r$. We then obtain
        $$r={\mathrm{ord}}_c(g)=\mathrm{ord}_c\left(\eta^{(|r|)}\right)=m+r$$ 
        implying that $m=0.$ But this is a contradiction to $\mathrm{ord}_c(\eta)=m<0$ and so $g$ has no poles.

(\ref{expcase}):  We only need to prove that $g$ has no poles at any $0\neq c\in \bar{F}.$ 
Assume on the contrary that $g$ has  a pole at $0\neq c\in \bar{F}$ with $\mathrm{ord}_c(g)=r <0.$ 
 Since $g$ is $\infty$-integrable, we can choose $\eta\in F(y)$ such that $g=\eta^{(|r|)}$. 
 As above $\mathrm{ord}_c(g)<0$ implies that $\mathrm{ord}_c(\eta)=m<0$.
 We prove now that $ac-c' \neq 0$. Assume the contrary is true. Then $(y/c)'=(y'c-yc')/c^2$ would be zero and so $y/c$ a constant. But since $C_{\bar{F}(y)}=C_{F(y)}$, we would have that $y \in \bar{F}$ which contradicts the assumption that $y$ is transcendental over $F$. Using $ac-c' \neq 0$ we conclude similarly as above that $\mathrm{ord}_c(\eta^{(|r|)})=m+r$ and so that $m=0$. But this is a contradiction to $\mathrm{ord}_c(\eta)=m<0$. This implies that $g$ has no pole other than zero and so $g \in F[y,y^{-1}]$.

\end{proof}

	%%%%%%%%%%%%%%%%%%%%%%%%%%%%%%%%%%%%%%%%%%%%%%%%%%%%%%%	%%%%%%%%%%%%%%%%%%%%%%%%%%%%%%%%%%%%%%%%%%%%%%%%%%%%%%%	
	We conclude this article with Proposition~\ref{prop:inftyintegrablespecialcases} below on finding $\infty-$integrable elements of certain differential fields. In particular, Proposition~\ref{prop:inftyintegrablespecialcases} \eqref{nthrootextn} is a special case of an open problem in the theory of stable elementary integrals (see \cite[Section 5]{chen2022stability}).
    30%%%%%%%%%%%%%%%%%%%%%%%%%%%%%%%%%%%%%%%%%%%%%%%%%%%%%%%
    %%%%%%%%%%%%%%%%%%%%%%%%%%%%%%%%%%%%%%%%%%%%%%%%%%%%%%%
	\begin{proposition}\label{prop:inftyintegrablespecialcases}
		Let $F=C(x)$ be the rational function field with $x'=1$ and let $E$ be a differential field extension of $F$ having $C$ as its field of constants. Moreover, let $g \in E.$ 
		\begin{enumerate}[(i)]	
			\item\label{exptower-infint} If $E=C(x,e^x)$, then $g$ is $\infty-$integrable in $E$ if and only if $g = \sum_{i=m}^{n} f_i (e^x)^i$ for some $m,n \in \mathbb{Z}$ and $f_i\in C[x]$.
			\item \label{antitower-inftint} If $E=C(x, log(x))$, then $g$ is $\infty-$integrable in $E$ if and only if $g= \sum_{i=0}^{n} f_i (log(x))^i$ for some $n\in \mathbb{Z}_{\geq 0}$  and $f_i\in C[x,x^{-1}]$. 
			\item \label{nthrootextn} If $E=C(x^{1/n})$ for an $n\in \mathbb{N}_{\geq 2}$, then $g$ is $\infty-$integrable in $E$ if and only if $g=\sum_{i=0}^{n-1} f_i x^{i/n}$ for some $n\in \mathbb{Z}_{\geq 0}$, $f_0\in C[x]$ and $f_i\in C[x,x^{-1}]$ for all $i\geq1$.
		\end{enumerate}  
	\end{proposition}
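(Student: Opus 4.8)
The statement splits into an easy direction and a substantial one, and I would organize both around two simple mechanisms. First, if $W$ is a $\partial$-stable subring of $E$ on which $\partial$ acts surjectively, then every element of $W$ is $\infty$-integrable in $E$: given $g\in W$ one recursively picks $w_1,w_2,\dots\in W$ with $w_1'=g$, $w_{k+1}'=w_k$, so $w_k^{(k)}=g$ for all $k$. Second (for the converse), suppose an operator $D$ on some differential subring satisfies, at a fixed place $P$: $\mathrm{ord}_P(Df)\geq 0$ whenever $\mathrm{ord}_P(f)\geq 0$, and $\mathrm{ord}_P(Df)=\mathrm{ord}_P(f)-c$ (for a fixed $c\geq 1$) whenever $\mathrm{ord}_P(f)<0$. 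Then any $g$ for which, for every $k\in\mathbb{N}$, there is $\eta_k$ with $D^k\eta_k=g$ must satisfy $\mathrm{ord}_P(g)\geq 0$: otherwise $\mathrm{ord}_P(\eta_k)<0$ (as $\mathrm{ord}_P(\eta_k)\geq 0$ would give $\mathrm{ord}_P(g)\geq 0$), whence $\mathrm{ord}_P(g)=\mathrm{ord}_P(\eta_k)-kc\leq -1-kc$, impossible for large $k$ since $\mathrm{ord}_P(g)$ is a fixed integer. Each part of the proposition reduces to feeding the right ring into the first mechanism and the right operator into the second.

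\textbf{The ``if'' directions.} Here I would exhibit $W$. In \eqref{exptower-infint}, take $W=C[x][e^x,e^{-x}]=\bigoplus_{i\in\mathbb{Z}}C[x]\,e^{ix}$; since $\partial(p\,e^{ix})=((\partial+i)p)\,e^{ix}$ and $\partial+i\colon C[x]\to C[x]$ is surjective (bijective for $i\neq 0$, where it preserves degree scaling the leading term by $i$; the ordinary derivation for $i=0$), $\partial$ is surjective on $W\ni g$. In \eqref{antitower-inftint}, take $W=C[x,x^{-1}][\log x]$ and check surjectivity of $\partial$ on $W$ by induction on the $\log x$-degree, using $x^{-1}(\log x)^i=\partial\!\big(\tfrac{(\log x)^{i+1}}{i+1}\big)$ to absorb the $x^{-1}$-terms and $\tfrac{x^{m+1}}{m+1}(\log x)^i$ for $m\neq -1$. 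In \eqref{nthrootextn}, put $t=x^{1/n}$, so $E=C(t)$ with $t'=\tfrac1n t^{1-n}$ and $C(t)=\bigoplus_{r=0}^{n-1}t^rC(x)$, and $\partial(t^rh)=t^rD_r(h)$ with $D_r:=\partial+\tfrac{r}{nx}$; taking $W=C[x]\oplus\bigoplus_{r=1}^{n-1}t^rC[x,x^{-1}]$, surjectivity of $\partial$ on $W$ follows from surjectivity of $\partial$ on $C[x]$ and from $D_r(x^m)=(m+\tfrac rn)x^{m-1}$ with $m+\tfrac rn\neq 0$ (since $0<\tfrac rn<1$), so $D_r$ is bijective on $C[x,x^{-1}]$.

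\textbf{The ``only if'' directions.} Let $g\in E$ be $\infty$-integrable with $\eta_k^{(k)}=g$. In \eqref{exptower-infint}, Proposition~\ref{inftinteg-simpleextn}\eqref{expcase} gives $g\in C(x)[e^x,e^{-x}]$; the valuation argument at each place $e^x=c$, $c\in\overline{C(x)}\setminus\{0\}$ — where $c-c'\neq 0$ since $e^x$ is transcendental over $C(x)$, so $\mathrm{ord}_c$ drops by $1$ under $\partial$ at poles — shows $\eta_k\in C(x)[e^x,e^{-x}]$ too; expanding $\eta_k=\sum_i s_{k,i}e^{ix}$, $g=\sum_i r_ie^{ix}$ gives $r_i=(\partial+i)^ks_{k,i}$, so the second mechanism with $D=\partial+i$ at each $x_0\in C$ forces $r_i\in C[x]$. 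In \eqref{antitower-inftint}, Proposition~\ref{inftinteg-simpleextn}\eqref{firstordercase} gives $g,\eta_k\in C(x)[\log x]$ (its hypothesis holds as $\log x$ is transcendental over $\overline{C(x)}$, so $1/x-c'\neq 0$ for all $c\in\overline{C(x)}$); lacking a finite grading, at each $x_0\in C\setminus\{0\}$ I would pass through the differential $C$-embedding $C(x,\log x)\hookrightarrow C(\lambda)((v))$, $v=x-x_0$, $\lambda'=0$, sending $x\mapsto x_0+v$ and $\log x\mapsto\lambda+L(v)$ with $L(v)=\sum_{j\geq 1}\tfrac{(-1)^{j-1}}{jx_0^j}v^j$ (so $L'=1/x$); a pole at $x_0$ of a coefficient of $g$ becomes a genuine pole of the image of $g$, and the second mechanism with $D=d/dv$ is contradicted, so all coefficients of $g$ lie in $C[x,x^{-1}]$. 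In \eqref{nthrootextn}, $g=\sum_{r=0}^{n-1}t^rg_r$ and $\partial^k(t^rh)=t^rD_r^k(h)$ reduce $\infty$-integrability of $g$ to $\infty$-$D_r$-integrability of each $g_r\in C(x)$; for $r\geq 1$, $D_r$ is regular at every $x_0\in C\setminus\{0\}$ and drops $\mathrm{ord}_{x_0}$ by $1$ at poles, giving $g_r\in C[x,x^{-1}]$, while for $r=0$, $g_0$ is ordinarily $\infty$-integrable in $C(x)$, hence $g_0\in C[x]$ (second mechanism with $D=\partial$ at every $x_0\in C$ — the content of the first example of this section). Rewriting $t^i=x^{i/n}$ then gives the asserted shape of $g$.

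\textbf{Expected obstacle.} The routine but essential checks are that the witnesses $\eta_k$ lie in the same subring as $g$ (via the valuation argument at the ``new'' places $e^x=c$, $\log x=c$), and, for \eqref{antitower-inftint}, that $x\mapsto x_0+v$, $\log x\mapsto\lambda+L(v)$ really is a differential embedding — i.e.\ that $\lambda+L(v)$ is transcendental over $C((v))$ and $L'(v)=1/(x_0+v)$. The part I expect to carry the real content is \eqref{nthrootextn}, a special case of an open problem: the crux is that $\partial$ respects the grading $C(x^{1/n})=\bigoplus_r x^{r/n}C(x)$ through the twisted operators $D_r=\partial+\tfrac r{nx}$, which are invertible exactly away from $x=0$, and this is precisely what makes the conclusion asymmetric, permitting poles at $x=0$ for $r\geq 1$ but forbidding them for $r=0$.
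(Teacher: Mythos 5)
Your argument is correct, and in two of the three parts it takes a genuinely different route from the paper. For part (iii) your twisted operators $D_r=\partial+\tfrac{r}{nx}$ on the graded pieces $x^{r/n}C(x)$ are exactly the paper's computation (Equation \eqref{polecomparison-algextn} and the order comparison at $c\neq 0$, with the $r=0$ piece handled by the partial-fraction example), so there the two proofs coincide. For part (i) the paper instead applies Proposition~\ref{inftinteg-simpleextn} twice, over the base fields $C(e^x)$ and $C(x)$, and concludes by intersecting $C(e^x)[x]\cap C(x)[e^x,e^{-x}]=C[x,e^x,e^{-x}]$; you apply only part (ii) of that proposition, then show the witnesses $\eta_k$ themselves lie in $C(x)[e^x,e^{-x}]$ (your valuation step at the places $e^x=c$, $c\neq 0$, which is sound and is essentially a rerun of the proposition's own pole argument), split by the $e^{ix}$-grading and finish with the order-drop mechanism for $\partial+i$ at finite $x$-places. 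Your route avoids the two-variable intersection argument at the cost of the extra check on the $\eta_k$; the paper's route avoids touching the witnesses at all. The largest divergence is part (ii): the paper argues Galois-theoretically, inducting on $\deg_{\log x}$ using automorphisms $\sigma(\log x)=\log x+c_\sigma$ and the $\infty$-integrability of $\sigma(g)-g$, with the base case that $\infty$-integrable elements of $C(x)$ inside $E$ are exactly $C[x,x^{-1}]$; you instead embed $C(x,\log x)$ differentially into $C(\lambda)((v))$, $v=x-x_0$, $\log x\mapsto\lambda+\log(1+v/x_0)$, and run the valuation mechanism for $d/dv$. This works: $\lambda+L(v)$ is transcendental over $C((v))$, the enlarged constant field is harmless since the mechanism is purely valuation-theoretic, and your asserted key point that a pole of some coefficient $f_i$ at $x_0$ produces a genuine $v$-pole of the image holds because at the minimal $v$-order the contributions from distinct $i$ carry distinct powers of $\lambda$ and cannot cancel; you flagged exactly these checks, and they go through. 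Finally, your uniform treatment of the easy directions (surjectivity of $\partial$ on the subrings $C[x][e^x,e^{-x}]$, $C[x,x^{-1}][\log x]$ and $C[x]\oplus\bigoplus_{r\geq 1}x^{r/n}C[x,x^{-1}]$) is tidier than the paper, which handles these ad hoc ("integration by parts", explicit monomial antiderivatives) and does not spell out the "if" direction of (i) at all.
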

	\begin{proof}		
	(\ref{exptower-infint}): Let $g$ be $\infty-$integrable over $F(x,e^x).$ First, we replace the field $F$ in Proposition \ref{inftinteg-simpleextn}  with $F(e^x)$ and obtain from part \eqref{firstordercase} that $g\in F(e^x)[x].$ Next, we apply Proposition \ref{inftinteg-simpleextn} with $F$ replaced by $F(x)$ and obtain from part \eqref{expcase}  that $g\in F(x)[e^x,e^{-x}].$ Using $g\in F(e^x)[x]$ we conclude that $g\in F[x,e^x,e^{-x}].$
	
	(\ref{antitower-inftint}):  It is easily seen using "integration by parts" that elements of the form $f\log^n(x)$ are $\infty-$integrable for any $n\in \mathbb N$ and $f\in C[x, 1/x].$ To prove the converse, we first apply Proposition \ref{inftinteg-simpleextn} \eqref{firstordercase} and obtain that if $g\in E$ is $\infty-$integrable, then there exist $n\in \mathbb{Z}_{\geq 0}$, $f_1,\dots, f_n \in C(x)$ with $f_n\neq 0$ such that  
    $$g= f_n\log^n(x)+f_{n-1}\log^{n-1}(x)+\cdots+f_0.$$
     Now we shall use an induction on $n$ to prove that $g\in C[x,1/x][\log(x)].$

	Noting that elements of $C(x)$, which are $\infty-$integrable in $E$, are precisely the elements of $C[x, 1/x],$ we assume that any $h\in C(x)[\log(x)]$ that is $\infty-$integrable and having $\deg_{\log(x)}(h)\leq n-1$  belongs to $C[x,1/x][\log(x)].$  Now,
	for any nontrivial differential automorphism $\sigma\in \mathscr G(E|C(x))$ with 
    $$\sigma(\log(x))=\log(x)+c_{\sigma}$$ 
    for some $0 \neq c_{\sigma} \in C$. We observe that $\sigma(g)$ is also $\infty-$integrable and therefore 
    $$\sigma(g)-g=nc_{\sigma}f_n \log^{n-1}(x)+\cdots $$ 
    is $\infty-$integrable, which  is a polynomial in $\log(x)$ over $C(x)$ of degree $n-1$ with leading coefficient $nc_\sigma f_n \in C(x).$  
    Thus, by the induction assumption we have $f_n\in C[x, 1/x].$
    Since $$g-f_n\log^n(x)=f_{n-1}\log^{n-1}(x)+\cdots+f_0$$ is also $\infty-$integrable and of degree $n-1$, the induction assumption implies that $f_{n-1}, \dots, f_0\in C[x,1/x]$ as desired.

(\ref{nthrootextn}): Note that $f\in C[x]$ is $\infty-$integrable in $C[x]$ and that for any $i \in \mathbb{Z}$ the element $x^{i/n}$ is also $\infty-$integrable in $E.$ For any $f\in C[x,x^{-1}],$ we see that $f x^{i/n}$ is a sum of terms of the form $cx^{m/n}$ with some $m\in \mathbb Z.$ Thus, for each $1\leq i\leq n$ the element $fx^{i/n}$ is $\infty-$integrable and consequently, $g=\sum_{i=0}^{n-1} f_i x^{i/n}$ is also $\infty-$integrable.

Conversely, let $g \in E$ be $\infty-$integrable in $E$.  Then, for each $j\in \mathbb N$ there is an element $g_j\in E$ such that $g^{(j)}_j=g.$ We write $g = \sum_{i=0}^{n-1} f_i x^{i/n}$ and $g_j=\sum_{i=0}^{n-1} f_{i,j}x^{i/n}$ with $f_i\in C(x)$ and $f_{i,j}\in C(x)$.
Computing $g^{(j)}_j$ and comparing the result with $g$ we obtain for all $0\leq i \leq n-1$ that    
$$f_{i}x^{i/n}=\left(f_{ij}x^{i/n}\right)^{(j)}=\left(f^{(j)}_{i,j}+\frac{ij}{nx}f^{(j-1)}_{i,j}+\dots\right)x^{i/n}$$ and so
\begin{equation}\label{polecomparison-algextn}
f_i=f^{(j)}_{i,j}+\frac{ij}{nx}f^{(j-1)}_{i,j}+\dots.
\end{equation}
For $i=0$ we then have $f_0=f_{0,j}^{(j)}$ for all $j\in \mathbb N$, meaning that $f_0$ is $\infty$-integrable, and so $f_0\in C[x].$ We shall now show that for all $1\leq i\leq  n-1$ the element $f_i$ has no pole at any $ 0\neq c \in C$, that is, $\mathrm{ord}_c(f_i)\geq 0$. This would then prove that each $f_i\in C[x,x^{-1}].$
Suppose on the contrary that for some $0\neq c\in C$ and some $1\leq i \leq n-1$ we have $\mathrm{ord}_c(f_i)=r<0.$  Then, from Equation \eqref{polecomparison-algextn}, we have that $\mathrm{ord}_c\left(f_{i,j}\right)<0$  for each $j\in \mathbb N$ and so 
$$
\mathrm{ord}_c\left(f^{(j)}_{i,j}+\frac{i,j}{nx}f^{(j-1)}_{i,j}+\dots\right)=\mathrm{ord}_c\left(f^{(j)}_{i,j}\right)
\ \mbox{for each} \  j\in \mathbb{N} \, .
$$  
In particular, for $j=|r|$ we get  $$r=\mathrm{ord}_c(f_i)=\mathrm{ord}_c\left(f_{i,|r|}\right)+r< r,$$ which is absurd. This proves (\ref{nthrootextn}).
\end{proof}

	%%%%%%%%%%%%%%%%%%%%%%%%%%%%%%%%%%%%%%%%%%%%%%%%%%%%%%%%%%%%%%%%%%%%%%%%%%%%%%%%%%%%%%%%%%%%%%%%%%%%%%%%%%%%%%%%%%%%%%%%%%%%%%%%%%%%%%%%%%%%%%%%%%%%%%%%%%%%%%%%%%%%%%%%%%%%%%%%%%%%%%%%%%%%%%%%%%%%%%%%%%%%%%%%%%%%%%%%%%%%%%%%%%%%%%%%%%%%%%%%%%%%%%%%%%%%%%%%%%%%%%%%%%

	\bibliographystyle{alpha}
	\bibliography{SSS}

\begin{thebibliography}{CFGL23}

\bibitem[CFGL23]{chenfengguo}
Shaoshi Chen, Ruyong Feng, Zewang Guo, and Wei Lu.
\newblock Stability problems on d-finite functions.
\newblock In {\em Proceedings of the 2023 International Symposium on Symbolic
  and Algebraic Computation}, ISSAC '23, page 164–172, New York, NY, USA,
  2023. Association for Computing Machinery.

\bibitem[Che22]{chen2022stability}
Shaoshi Chen.
\newblock Stability problems in symbolic integration, 2022.

\bibitem[CK02]{churchill2002cyclic}
RC~Churchill and Jerald~J Kovacic.
\newblock Cyclic vectors.
\newblock In {\em Differential algebra and related topics}, pages 191--218.
  World Scientific, 2002.

\bibitem[CLO08]{cox2008ideals}
D.A. Cox, J.~Little, and D.~O'Shea.
\newblock {\em Ideals, Varieties, and Algorithms: An Introduction to
  Computational Algebraic Geometry and Commutative Algebra}.
\newblock Undergraduate Texts in Mathematics. Springer New York, 2008.

\bibitem[Kol48]{Kol-1948}
E.~R. Kolchin.
\newblock Algebraic matric groups and the picard-vessiot theory of homogeneous
  linear ordinary differential equations.
\newblock {\em Annals of Mathematics}, 49(1):1--42, 1948.

\bibitem[Kol68]{kolchin1968algebraic}
Ellis~R Kolchin.
\newblock Algebraic groups and algebraic dependence.
\newblock {\em American Journal of Mathematics}, 90(4):1151--1164, 1968.

\bibitem[Kov69]{KovacicInvProb}
Jerald Kovacic.
\newblock The inverse problem in the {G}alois theory of differential fields.
\newblock {\em Ann.\ of Math.\ (2)}, 89:583--608, 1969.

\bibitem[Kov01]{Kovacic-2001}
Jerald Kovacic.
\newblock Cyclic vectors and picard-vessiot extensions.
\newblock 05 2001.

\bibitem[KRS24]{KRS-24}
Partha Kumbhakar, Ursashi Roy, and Varadharaj~R. Srinivasan.
\newblock A classification of first order differential equations.
\newblock {\em Journal of Algebra}, 644:580--608, 2024.

\bibitem[Mag94]{mag-book}
Andy~R. Magid.
\newblock {\em Lectures on differential {G}alois theory}, volume~7 of {\em
  University Lecture Series}.
\newblock American Mathematical Society, Providence, RI, 1994.

\bibitem[Ros68]{rosenlicht1968liouville}
Maxwell Rosenlicht.
\newblock Liouville’s theorem on functions with elementary integrals.
\newblock {\em Pacific Journal of Mathematics}, 24(1):153--161, 1968.

\bibitem[RS10]{vrs-10}
V.~Ravi~Srinivasan.
\newblock Iterated antiderivative extensions.
\newblock {\em J. Algebra}, 324(8):2042--2051, 2010.

\bibitem[RS22]{Roq-Sin}
Julien Roques and Michael~F. Singer.
\newblock On the algebraic dependence of holonomic functions.
\newblock {\em Ann. H. Lebesgue}, 5:141--177, 2022.

\bibitem[Sri20]{vrs-20}
Varadharaj~R. Srinivasan.
\newblock Differential subfields of liouvillian extensions.
\newblock {\em Journal of Algebra}, 550:358--378, 2020.

\bibitem[vdPS03]{MvdP03}
Marius van~der Put and Michael~F. Singer.
\newblock {\em Galois theory of linear differential equations}, volume 328 of
  {\em Grundlehren der mathematischen Wissenschaften [Fundamental Principles of
  Mathematical Sciences]}.
\newblock Springer-Verlag, Berlin, 2003.

\end{thebibliography}

\end{document}